\newtheorem{thm}{Theorem}[section]
\newtheorem{lem}[thm]{Lemma}
\newtheorem{prop}[thm]{Proposition}
\numberwithin{equation}{section}
\begin{document}

\title{\bf Semi-stable twisted holomorphic vector bundles over Gauduchon manifolds}
\author{Zhenghan Shen}

\address{Zhenghan Shen\\School of Mathematics and Statistics\\
Nanjing University of Science and Technology\\
Nanjing, 210094,P.R. China\\}\email{mathszh@njust.edu.cn}

\subjclass[2020]{53C07, 57N16}
\keywords{approximate Hermitian-Einstein metric, twisted vector bundle, semi-stability, Gauduchon manifold}

\maketitle

\begin{abstract}
In this paper, we study the semi-stable twisted holomorphic  vector bundles over compact Gauduchon manifolds. By using Uhlenbeck--Yau's continuity method, we show that  the existence of approximate Hermitian--Einstein structure and  the semi-stability of twisted holomorphic vector bundles are equivalent over compact Gauduchon manifolds. As its application, we show that the Bogomolov type inequality is also valid for a semi-stable twisted holomorphic vector  bundle.

\end{abstract}

\vskip 0.2 true cm

%------------------------------------------------------------------------------------%

\pagestyle{myheadings}
\markboth{\rightline {\scriptsize Z. Shen }}
         {\leftline{\scriptsize Semi-stable twisted holomorphic vector bundles over Gauduchon manifolds}}

\bigskip
\bigskip

%------------------------------------------------------------------------------------%
%------------------------------------------------------------------------------------%

\section{ Introduction}

Let $M$ be a complex manifold of dimension $n$ and $g$ a Hermitian metric with the associated K\"ahler form $\omega$. The Hermitian metric $g$ is called Gauduchon if
\begin{equation*}
    \partial\bar{\partial}\omega^{n-1}=0.
\end{equation*}
It has been proved by Gauduchon (\cite{Gau84}) that every Hermitian metric is conformal to a Gauduchon metric (uniquely up to scaling when $n\geq 2$). The Hermitian metric $\omega$ is said to be Astheno-K\"ahler if $\partial\bar{\partial}\omega^{n-2}=0$. The concept of Astheno-K\"ahler was introduced by Jost and Yau in \cite{JY93}. In this case, the second Chern number of a twisted vector bundle can be well defined. Throughout the paper, we will assume $\omega$ is Gauduchon if there is no additional emphasis.

Twisted sheaves were introduced by Giraud in \cite{Gi71}, and can be defined in several equivalent ways. It can be regarded as family of sheaves on an open covering of $M$ together with a twisted gluing, as sheaves of modules over an Azumaya algebra on $M$ (\cite{C00}), as sheaves over a gerb on $M$ (\cite{Cha98,Gi71,Wang12}), etc. These objects have found many applications in physics for the description of the so-called $B$-fields and the $K$-theory. We should mention that the twisted $K$-theory has appeared earlier in the mathematical physics literature, in the study of the quantum Hall effect (\cite{CHMM98,CHM99}).

 In recent years, the existence of Hermitian--Einstein has been extensively studied. The classical Donaldson--Uhlenbeck--Yau theorem says that a holomorphic vector bundle over compact K\"ahler manifolds admit Hermitian--Einstein metrics if it is stable. This result was first proved by Narasimhan and Seshadri (\cite{NS65}) for compact Riemannian surface by using the methods of algebraic geometry. Subsequently, Donaldson (\cite{Don83,Don85,Don87}) gave a new proof of the Narasimhan-Seshadri theorem, and generalized to algebraic surfaces and algebraic manifolds by using Hermitian--Yang--Mills flow. Uhlenbeck and Yau (\cite{UY86, UY89}) proved it for general compact K\"ahler manifold by using  continuity method.

The inverse of the problem which states that a holomorphic bundle carrying a Hermitian--Einstein metric must be polystable is also valid. It was proved by Kobayashi (\cite{Ko87}) and L\"ubke (\cite{Lu83}) independently. So there is a one-to-one correspondence bewteen the algebraic notion of stability and the existence of Hermitian--Einstein on holomorphic vector bundle. This is usually referred as to Hitchin--Kobayashi correspondence (see \cite{AG,BJS14,iR,LT06,SZZ} and references therein). There are several interesting generalizations for this coorespondence. For example, Li--Yau (\cite{LY87}) studied the coorespondence for Hermitian manifolds with Gauduchon metric; Hitchin (\cite{Hit87}) and Simpson (\cite{Sim88}) studied the Higgs bundle case which plays an important role in non-abelian Hodge theory; Mochizuki (\cite{Mo20}) studied such correspondence on the non-compact K\"ahler manifolds; Biswas--Loftin--Stemmler (\cite{BLS13}) studied for affine manifolds; Biswas--Kasuya (\cite{BK21,BK21arx}) studied for Sasakian manifolds; Zhang--Zhang--Zhang (\cite{ZZZ21}) studied for some non-compact non-K\"ahler manifolds.

If one considers the weaker stability condition, there is an analog of such correspondence between the semi-stability and the approximate Hermitian-Einstein metric. Kobayashi (\cite{Ko87}) introduced the notion of approximate Hermitian--Einstein metric for holomorphic vector bundles. He proved a holomorphic vector bundle admitting an approximate Hermitian--Einstein metric must be semi-stable over a compact K\"ahler manifold. In
\cite{BG07}, Bruzzo and Gra\~na Otero generalized above result to the Higgs bundle case. When the base manifold is projective, Kobayashi proved the inverse part and conjectured that it should be true for general K\"ahler manifolds. It was solved by Li--Zhang (\cite{LZ15}) and Jacob (\cite{Jac14}) independently for the general K\"ahler manifold case. In the principal bundle case, it was studied by Biswas--Jacob--Stemmler (\cite{BBJS13,BJS12}). Recently, Nie--Zhang (\cite{NZ18}) proved the existence of approximate Hermitian--Einstein structures is equivalent to the semi-stability on Higgs bundles over compact Gauduchon manifolds. Subsequently, Zhang--Zhang--Zhang (\cite{ZZZ21}) also studied the semi-stable Higgs bundles on some non-compact Gauducon manifolds.

In the present paper, our considerations focus on the twisted vector bundles over compact Gauduchon manifolds. The twisted vector bundles are local vector bundles on a open covering with a twisted gluing, which is the point of view of C\u{a}ld\u{a}raru (\cite{C00, CKS03}). Let $\mathscr{U}:=\{U_i\}_{i\in I}$ be a  fixed open covering of $M$. A $B$-field on $M$ with respect to $\mathscr{U}$ is a family $B=\{B_{i}\}_{i\in I}$, where each $B_i$ is a $2$-form on $U_i$ such that
\begin{equation}
    B_i-B_j=d\beta_{ij},
\end{equation}
for some $1$-forms $\beta_{ij}$ on $U_{ij}:=U_i\cap U_j$. Notice that $d(\beta_{ij}+\beta_{jk}+\beta_{ki})=0$, we may find $U(1)$-valued functions $\alpha_{ijk}$ on $U_{ijk}:=U_i\cap U_j \cap U_k$ such that
\begin{equation}
    \beta_{ij}+\beta_{jk}+\beta_{ki}=-\alpha_{ijk}^{-1}d\alpha_{ijk},
\end{equation}
when the open covering $\mathscr{U}$ is sufficiently fine. Then $\alpha_B:=\{\alpha_{ijk}\}$ is a $2$-cocycle whose cohomology class lies in $H^2(M,\mathscr{O}_M^*)$, which is called the twist induced by $B$. We will omit the subscript $B$ when there is no confusion. An $\alpha$-twisted holomorphic vector bundle is a pair $E:=(\{E_i\}_{i\in I},\{\varphi_{ij}\}_{i,j\in I})$, where $E_i$ is a holomorphic vector bundle on $U_i$ and
\begin{equation*}
    \varphi_{ij}:E_j|_{U_{ij}}\to E_i|_{U_{ij}}
\end{equation*}
is an isomorphism of holomorphic vector bundles on $U_{ij}$, such that $\varphi_{ii}={\rm Id}_{E_i},\varphi_{ij}^{-1}=\varphi_{ji}$ and $\varphi_{ki}\circ \varphi_{jk}\circ \varphi_{ij}=\alpha_{ijk}\cdot {\rm Id}_{E|_{U_{ijk}}}$ for each $i,j,k\in I$. Here we usually omit the holomporphic structure $\bar{\partial}_{E_i}$ on $E_i$ when there is no confusion. It can be easily see that $1$-twisted holomorphic vector bundle is the usual holomorphic vector bundle.

In \cite{Wang12}, S. Wang proved the Hitchin--Kobayashi coorespondence for twisted holomorphic vector bundles over a gerbe on compact K\"ahler manifolds. All the definitions in \cite{Wang12} are in the category of twisted holomorphic vector bundles as holomorphic vector bundles over a gerb. Recently, A. Perego (\cite{Perego21}) generalized Wang's result to complex manifolds with a Gauduchon metric. And he also proved  the twisted version of approximate Hitchin--Kobayashi coorespondence on compact K\"ahler manifolds. Inspired by Perego's work, we consider the case that the approximate Hermitian--Einstein metric for twisted holomorphic vector bundle over compact Gauduchon manifolds. In fact, we prove the following theorem.

\begin{thm}\label{maintheorem}
Let $E=\{E_i,\varphi_{ij}\}_{i,j\in I}$ be an $\alpha$-twisted  holomorphic vector bundle of rank $r$ over a compact Gauduchon manifold $(M,\omega)$. Then $E$ is $\omega$-semi-stable if and only if it admits an approximate Hermitian--Einstein structure.
\end{thm}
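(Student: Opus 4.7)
The plan is to establish the two implications separately, closely following the strategy that Nie--Zhang used for Higgs bundles over Gauduchon manifolds, adapted to the twisted setting. Throughout, I will work with local Hermitian metrics $H=\{H_i\}$ whose transitions are compatible with the $\varphi_{ij}$, so that $\sqrt{-1}\Lambda_\omega F_H$ is globally well-defined as an endomorphism of $E$ and one has a degree functional on $\alpha$-twisted saturated subsheaves via Simpson's Chern--Weil formula. The Gauduchon condition $\partial\bar\partial\omega^{n-1}=0$ is what makes this degree well-defined on weakly holomorphic $L^2_1$-projections.

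For the easy direction (approximate Hermitian--Einstein $\Rightarrow$ $\omega$-semi-stability), I would take an $\alpha$-twisted coherent subsheaf $\mathcal{F}\subset E$, realize its saturation by an $L^2_1$-projection $\pi$, and integrate the usual Gauss-type identity relating $\tr(\pi\sqrt{-1}\Lambda_\omega F_H)$ to $\deg_\omega(\mathcal{F})$ plus an $L^2$ second-fundamental-form term. Pairing against $\pi$ and applying Cauchy--Schwarz gives
\begin{equation*}
\deg_\omega(\mathcal{F})\;\le\;\mathrm{rank}(\mathcal{F})\,\lambda \;+\; C\,\bigl\|\sqrt{-1}\Lambda_\omega F_H-\lambda\,\mathrm{Id}\bigr\|_{L^2},
\end{equation*}
and letting $H$ run through an approximate Hermitian--Einstein sequence yields $\mu_\omega(\mathcal{F})\le\mu_\omega(E)$. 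Nothing in this argument interacts with the cocycle $\alpha$ beyond the fact that $\pi$ commutes with the $\varphi_{ij}$.

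For the converse, I would fix a background metric $K$ and use the Uhlenbeck--Yau perturbation
\begin{equation*}
\sqrt{-1}\Lambda_\omega F_{H_\varepsilon}-\lambda\cdot\mathrm{Id}=-\varepsilon\,\log(H_\varepsilon K^{-1}),\qquad \varepsilon>0.
\end{equation*}
For each $\varepsilon>0$ this perturbed equation is uniformly elliptic and solvable on the twisted bundle by the analytic machinery already developed by Perego in the twisted Gauduchon setting (continuity method on a family of equations interpolating with a trivial one, together with $C^0$ estimates from a maximum principle that uses only the Gauduchon property and positivity of $\varepsilon$). The output is a family of smooth metrics $\{H_\varepsilon\}$, and writing $s_\varepsilon:=\log(H_\varepsilon K^{-1})$, the dichotomy to establish is: either $\sup_\varepsilon\|s_\varepsilon\|_{L^\infty}<\infty$, in which case elliptic regularity produces an actual Hermitian--Einstein metric (hence trivially an approximate HE structure), or $\|s_\varepsilon\|_{L^\infty}\to\infty$, in which case the approximate HE property must come from an alternative route. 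In the bounded case the existence of an honest HE metric actually gives much more than AHE; what must be extracted from the unbounded case is a direct contradiction with semi-stability.

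The main step, and I expect the principal technical obstacle, is analyzing the unbounded subcase. Normalizing $u_\varepsilon:=s_\varepsilon/\|s_\varepsilon\|_{L^\infty}$ and passing to a weak $L^2_1$-limit $u_\infty\not\equiv 0$, I would apply Uhlenbeck--Yau's construction of spectral projections of $u_\infty$ to produce a nontrivial weakly holomorphic $L^2_1$-projection $\pi_\infty$. The energy identity obtained by testing the perturbed equation against $u_\varepsilon$ and passing to the limit then forces some eigenvalue-projection of $u_\infty$ to define a twisted saturated subsheaf $\mathcal{F}\subset E$ with $\mu_\omega(\mathcal{F})>\mu_\omega(E)$, contradicting semi-stability. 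Verifying in detail that (a) $\pi_\infty$ respects the cocycle $\alpha$ (which follows because each $u_\varepsilon$ is a global endomorphism compatible with the $\varphi_{ij}$, a property preserved by weak limits and by functional calculus), and (b) the Gauduchon weight $\omega^{n-1}$ controls all boundary terms in the passage from the local Chern--Weil identity to the global degree inequality, is where the twisted Gauduchon nature enters in a nontrivial way. This is the portion of the argument that cannot simply be quoted from the K\"ahler or untwisted case and which constitutes the real content of the theorem.
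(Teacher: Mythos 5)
Your overall architecture for the hard direction is the one the paper uses — the Uhlenbeck--Yau perturbed equation $\sqrt{-1}\Lambda_\omega F_{H_\varepsilon}-\lambda\,\mathrm{Id}+\varepsilon\log h_\varepsilon=0$ (solvable for $\varepsilon>0$ by Perego), $C^0$ a priori estimates, a blow-up analysis with Simpson's monotonicity trick, Uhlenbeck--Yau spectral projections, and Perego's regularity lemma turning weakly holomorphic twisted projections into coherent twisted subsheaves. Your easy direction (Chern--Weil/Gauss--Codazzi for the $H$-orthogonal projection of a saturated subsheaf plus Cauchy--Schwarz) is a legitimate alternative to the paper's route, which instead applies a vanishing theorem for morphisms between approximate Hermitian--Einstein bundles to the untwisted bundle $\wedge^pE\otimes\det(\mathscr{F})^{-1}$; both rest on ingredients Perego provides. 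However, there is one genuine logical gap in your hard direction.

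The gap is your dichotomy: ``either $\sup_\varepsilon\|s_\varepsilon\|_{L^\infty}<\infty$, yielding an honest Hermitian--Einstein metric, or $\|s_\varepsilon\|_{L^\infty}\to\infty$, yielding a contradiction with semi-stability.'' The second implication is false. A semi-stable but non-polystable bundle (already untwisted, e.g.\ a non-split extension of degree-zero line bundles on a curve) admits no Hermitian--Einstein metric, so by your own bounded-case argument its $\|s_\varepsilon\|_{L^\infty}$ must be unbounded — yet it has no subsheaf of strictly larger slope. Run with no further input, the blow-up analysis only produces projections $\pi_j$ with $\sum_j(\lambda_{j+1}-\lambda_j)\,\mathrm{rank}(E_j)\bigl(\mu_\omega(E)-\mu_\omega(E_j)\bigr)\le 0$, which is perfectly consistent with semi-stability. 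The correct structure is to assume that the approximate Hermitian--Einstein property \emph{fails}, i.e.\ that $\max_M|\sqrt{-1}\Lambda_\omega F_{H_{\varepsilon_i}}-\lambda\,\mathrm{Id}|=\varepsilon_i\max_M|s_{\varepsilon_i}|\ge\delta>0$ along a subsequence; this $\delta$ gives $l_i=\|s_{\varepsilon_i}\|_{L^2}\ge\delta/(C\varepsilon_i)-\max_M|\Phi(K)|_K$, so the right-hand side $-\varepsilon_il_i$ of the energy identity is bounded above by $-\delta/C+o(1)$, the term $-\delta/C$ survives to the weak limit, and the weighted sum above becomes strictly negative, forcing $\mu_\omega(E_{j_0})>\mu_\omega(E)$ for some $j_0$. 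Without tracking this $\delta$ your limiting inequality is not strict and no contradiction with semi-stability results. Two smaller points: the paper's dichotomy is on $\|\log h_\varepsilon\|_{L^2}$, and in the bounded case it does not build a Hermitian--Einstein metric at all — it simply concludes $\varepsilon\max_M|\log h_\varepsilon|\to0$ from the estimate $\max_M|\log h|\le C(\|\log h\|_{L^2}+\max_M|\Phi(K)|_K)$, avoiding your higher-order elliptic bootstrap; and with your $L^\infty$ normalization you still owe an argument that $u_\infty\not\equiv0$, which comes from that same estimate.
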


We should remark that Perego (\cite{Perego21}) only proved the above theorem when the base manifold is a K\"ahler manifold. He mainly follow the methods of Jacob (\cite{Jac14}) and Kobayashi (\cite{Ko87}) by using Donaldson's heat flow. The proof relies on the properties of the Donaldson's Lagrangian. However, the Donaldson's Lagrangian is not well-defined when the base manifold is only Gauduchon. So his argument can not be generalized to Gauduchon case directly. In this paper,  we will adapt  Nie--Zhang's arguments (\cite{NZ18}) to our case, but there are some differences in the treatment of certain details.

As an application, we obtain the following Bogomolov type inequality.
\begin{thm}\label{thm2}
Let $(M,\tilde{\omega})$ be a compact Astheno-K\"ahler manifold of dimension $n$, and $\omega$ a Gauduchon metric which is conformal to $\tilde{\omega}$. Let $E=\{E_i,\varphi_{ij}\}_{i,j\in I}$ be an $\alpha$-twisted holomorohic vector bundle of rank $r$ over $M$. If $E$ is $\omega$-semi-stable, then we have the following Bogomolov type inequality
\begin{equation}
    \int_{M}\bigg(2c_2(E)-\frac{r-1}{r}c_1(E)\wedge c_1(E)\bigg)\wedge \frac{\tilde{\omega}^{n-2}}{(n-2)!}\geq 0.
\end{equation}
\end{thm}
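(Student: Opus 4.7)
The plan is to invoke Theorem \ref{maintheorem} to produce an approximate Hermitian--Einstein structure on $E$ with respect to $\og$, and then extract the Bogomolov inequality from a Chern--Weil computation that is frozen cohomologically by the Astheno--K\"ahler hypothesis. First, since $E$ is $\og$-semi-stable, Theorem \ref{maintheorem} gives, for each $\vps>0$, a Hermitian structure $H_\vps$ on $E$ satisfying
\begin{equation*}
\sup_M \bigl|\sqrt{-1}\Lambda_\og F_{H_\vps} - \lmd\cdot\mathrm{Id}_E\bigr|_{H_\vps} < \vps,
\end{equation*}
where $\lmd = 2\pi\mu_\og(E)/\mathrm{Vol}_\og(M)$. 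A direct Chern--Weil manipulation yields the pointwise identity of $(2,2)$-forms
\begin{equation*}
2c_2(E,H) - \frac{r-1}{r}c_1(E,H)\wedge c_1(E,H) = \frac{1}{4\pi^2}\tr\bigl(F_H^\perp \wedge F_H^\perp\bigr),
\end{equation*}
where $F_H^\perp := F_H - \tfrac{1}{r}\tr(F_H)\mathrm{Id}_E$ is the trace-free part of the Chern curvature. Since the twisting scalars $\alpha_{ijk}\in U(1)$ act trivially by conjugation on $\tend(E)$, the right-hand side is a globally defined form on $M$.

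The Astheno--K\"ahler hypothesis $\pt\bp\tilde\og^{n-2}=0$ ensures that $\int_M \tr(F_H^\perp\wedge F_H^\perp)\wedge \tilde\og^{n-2}/(n-2)!$ is independent of the Hermitian structure $H$: the Bott--Chern transgression between the representatives for two metrics is $\pt\bp$-exact and hence integrates to zero against $\tilde\og^{n-2}$ by Stokes. Thus the left-hand side of the theorem may be evaluated using $H=H_\vps$ for any $\vps>0$. Next, I would apply the pointwise algebraic identity
\begin{equation*}
\tr\bigl(F_H^\perp\wedge F_H^\perp\bigr)\wedge \frac{\tilde\og^{n-2}}{(n-2)!} = \Bigl(\bigl|F_H^\perp\bigr|^2_{\tilde\og,H} - \bigl|\sqrt{-1}\Lambda_{\tilde\og}F_H^\perp\bigr|^2_{H}\Bigr)\cdot \frac{\tilde\og^n}{n!},
\end{equation*}
which is a standard Hodge-type computation for $\tend(E)$-valued $(1,1)$-forms that are skew-adjoint with respect to $H$. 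Discarding the non-negative $|F_{H_\vps}^\perp|^2$ term on the right then yields
\begin{equation*}
\int_M\left(2c_2(E)-\frac{r-1}{r}c_1(E)\wedge c_1(E)\right)\wedge \frac{\tilde\og^{n-2}}{(n-2)!} \geq -\frac{1}{4\pi^2}\int_M \bigl|\sqrt{-1}\Lambda_{\tilde\og}F_{H_\vps}^\perp\bigr|^2_{H_\vps}\,\frac{\tilde\og^n}{n!}.
\end{equation*}

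Because $\og$ and $\tilde\og$ are conformally related on the compact manifold $M$, the conformal factor $e^{2f}$ in $\Lambda_{\tilde\og}=e^{2f}\Lambda_\og$ on $(1,1)$-forms is bounded, and the approximate Hermitian--Einstein bound for $H_\vps$ propagates to $\bigl|\sqrt{-1}\Lambda_{\tilde\og}F_{H_\vps}^\perp\bigr|_{H_\vps}\leq C\vps$ uniformly on $M$. The right-hand side above is therefore bounded below by $-C^2\vps^2\mathrm{Vol}_{\tilde\og}(M)/(4\pi^2)$, and letting $\vps\to 0$ gives the Bogomolov inequality. The main obstacle is the pointwise algebraic identity in the second paragraph: while classical in the untwisted K\"ahler setting, it is a sign-sensitive multilinear computation that must be carefully tracked with endomorphism-valued coefficients on a merely Hermitian base. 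A secondary technical point is the well-definedness of the characteristic forms on the $\alpha$-twisted bundle $E$, which I would handle by interpreting $\tr(F_H^\perp\wedge F_H^\perp)$ as a Chern--Weil form for the genuine (untwisted) trace-free endomorphism bundle $\tend^\perp(E)$, on which the twisting cocycle acts trivially.
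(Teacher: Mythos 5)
Your proposal is correct and follows essentially the same route as the paper: approximate Hermitian--Einstein metrics from Theorem \ref{maintheorem}, the identity $4\pi^2\bigl(2c_2-\tfrac{r-1}{r}c_1\wedge c_1\bigr)=\mathrm{tr}(F_H^{\perp}\wedge F_H^{\perp})$, the Riemann bilinear (Hodge-type) relation against $\tilde{\omega}^{n-2}$, and the conformal transfer of the bound from $\Lambda_{\omega}$ to $\Lambda_{\tilde{\omega}}$ (which the paper isolates as Lemma \ref{lem4:1}, including the elementary observation that passing from $\lambda\cdot\mathrm{Id}_E$ to the trace-free part only decreases the pointwise norm). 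No substantive differences.
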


The Bogomolov type inequality was first proved by Bogomolov (\cite{Bo79}) for semi-stable holomorphic vector bundles over complex algrebraic surfaces. Subsequently, it was generalized to Hermitian--Einstein vector bundles over compact K\"ahler manifolds by L\"ubke ( \cite{Lu82}). Recently, Li--Nie--Zhang (\cite{LNZ22}) proved the Bogomolov type inequality over Gauduchon Astheno-K\"ahler manifolds. In twisted case, Perego (\cite{Perego21}) obtained the Bogomolov type inequality for semi-stable $\alpha$-twisted holomorphic vector bundle over compact K\"ahler manifolds. In our paper, we will proved the Bogomolov type inequality by following Li--Nie--Zhang's arguments. 

This article is organized as follows. In Section \ref{sec:Pre}, we briefly present some notations for $\alpha$-twisted holomorphic vector bundle, such as connection, curvature and semi-stability. In Section \ref{sec:Pro}, we give the detailed proof of Theorem \ref{maintheorem}. In Section
\ref{sec:Bo}, we prove the Bogomolov type  inequality for semi-stable $\alpha$-twisted holomorphic vector bundles over Gauduchon Astheno-K\"ahler manifolds.

\section{Preliminary}\label{sec:Pre}
In this section, we will introduce the basic notations of $\alpha$-twisted vector bundles that will be used throughout the paper. We will follow the notations of \cite{Perego21}.
\subsection{Hermitian metrics, connections and curvatures}

Let $E=\{E_i,\varphi_{ij}\}_{i,j\in I}$ be an $\alpha$-twisted holomorphic vector bundle over compact Gauduchon manifold $(M,\omega)$. A Hermitian metric on $E$ is a connection $H=\{H_i\}_{i\in I}$, where $H_i$ is a Hermitian metric on $E_i$ and $H_i=\varphi_{ij}^TH_j\bar{\varphi}_{ij}$ for each $i,j\in I$, i.e., for any $\xi,\eta\in \Gamma(E_i)$,
\begin{equation*}
  H_i(\xi,\eta)=H_j(\varphi_{ij}(\xi),\varphi_{ij}(\eta)).
\end{equation*}
Since $\alpha_{ijk}$ is $U(1)$-valued, it can be easily seen that the definition is well-defined. And there always exists a Hermitian metric $H=\{H_i\}_{i\in I}$ on an $\alpha$-twisted vector bundle (see \cite{Perego21} for details).

We should remark that $H_i$ also denotes the matrix of smooth functions with respect to a given local frame and $\varphi_{ij}$ denotes the matrix of smooth functions with respect to the chosen local frames of $E_i$ and $E_j$ when there is no confusion.

A connection on $\alpha$-twisted vector bundle is a family $D=\{D_i\}_{i\in I}$, where $D_i$ is a connection on $E_i$ and locally the connection $1$-form $A_i$ satisfies
\begin{equation*}
    A_i=\varphi_{ij}^{-1}A_j\varphi_{ij}+\varphi_{ij}^{-1}d\varphi_{ij}+\beta_{ij}\cdot {\rm Id}_E.
\end{equation*}
The notion of connection on a twisted bundle can be found in \cite{Ka12}. We say that a connection $D=\{D_i\}_{i\in I}$ is compatible with the Hermitian metric $H=\{H_i\}_{i\in I}$, if each $D_i$ is compatible with $H_i$, i.e., for any $\xi,\eta\in \Gamma(E_i)$,
\begin{equation*}
    d(H_i(\xi,\eta))=H_i(D_i(\xi),\eta)+H_i(\xi,D_i(\eta)).
\end{equation*}
In a local given frame of $E_i$, it usually represents as
\begin{equation*}
    dH_i=A_i^TH_i+H_i\bar{A}_i.
\end{equation*}
Let $F_D\in \Omega^2(\mbox{End}(E))$ be  curvature form of the connection $D=\{D_i\}_{i\in I}$. According to \cite{Perego21}, we have
\begin{equation*}
    F_D|_{U_i}=F_{D_i}-B_i\cdot {\rm Id}_{E_i},
\end{equation*}
where $F_{D_i}$ is the curvature of $D_i$ for every $i\in I$. Thoughtout the paper, we assume that each $B_i$ of the $B$-filed $B$ is a $(1,1)$-form on $U_i$ and $\beta_{ij}$ is a $(1,0)$-form on $U_{ij}$. Then given a Hermitian metric $H$ on the $\alpha$-twisted holomorphic vector bundle $E$, there is a unique Chern connection $D_H$ which is compatible with the Hermitian metric $H$ and the holomorphic structure of $E$. In fact, we have the following lemma.
\begin{lem}[Lemma 2.25 in \cite{Perego21}]
Let $E$ be an $\alpha$-twisted holomorphic vector bundle and $H$ be a Hermitian metric on $E$.
\begin{enumerate}
    \item [(1)] There is a unique connection $D_H$ which is compatible both with the holomorphic structure of $E$ and the Hermitian metric $H$.
    \item [(2)] If each $B_i$ of the $B$-field  is a $(1,1)$-form for every $i\in I$, then $F_D\in \Omega^{1,1}({\mbox{End}(E)})$.
\end{enumerate}
\end{lem}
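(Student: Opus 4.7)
The plan is to adapt the classical Chern connection construction to the twisted setting. On each $U_i$ I take the local Chern connection $D^H_i$ of the Hermitian holomorphic vector bundle $(E_i,\bar\partial_{E_i},H_i)$, characterised in a holomorphic frame by $A_i^{0,1}=0$ together with $dH_i=A_i^T H_i+H_i\bar A_i$. The nontrivial step is to check that the family $\{A_i\}$ so obtained satisfies the twisted transition rule used by Perego; this is a direct computation on $U_{ij}$ combining (a) the Hermitian gluing $H_i=\varphi_{ij}^T H_j\bar\varphi_{ij}$, (b) the holomorphicity of $\varphi_{ij}$, which gives $\partial\bar\varphi_{ij}=0$ and $d\varphi_{ij}=\partial\varphi_{ij}$, and (c) the $B$-field relation $B_i-B_j=d\beta_{ij}$ with $\beta_{ij}$ of type $(1,0)$, producing the scalar correction $\beta_{ij}\cdot\mathrm{Id}_E$ required by the twisted rule.

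For uniqueness, suppose $D=\{A_i\}$ and $D'=\{A'_i\}$ are two connections satisfying the conditions. Set $\eta_i:=A_i-A'_i$. Subtracting the two twisted transition identities, the $\beta_{ij}\cdot\mathrm{Id}_E$ terms cancel, giving $\eta_i=\varphi_{ij}^{-1}\eta_j\varphi_{ij}$, so $\{\eta_i\}$ glues to a global section of $\Omega^1(\mathrm{End}(E))$. Holomorphic compatibility of both connections forces $\eta_i^{0,1}=0$, while subtracting the two metric-compatibility identities yields $\eta_i^T H_i+H_i\bar\eta_i=0$, i.e.\ $\eta_i$ is skew-Hermitian with respect to $H_i$. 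A $(1,0)$-valued endomorphism-valued $1$-form that is skew-Hermitian vanishes pointwise, so $\eta_i\equiv0$ and $D=D'$.

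For part (2), the local formula $F_D|_{U_i}=F_{D^H_i}-B_i\cdot\mathrm{Id}_{E_i}$ already displayed identifies the restriction of the curvature. The classical Chern curvature $F_{D^H_i}$ on $(E_i,H_i)$ is of type $(1,1)$ by the standard argument (it is the $\bar\partial$ of the $(1,0)$-form representing $A_i$), and each $B_i$ is of type $(1,1)$ by hypothesis; hence $F_D|_{U_i}\in\Omega^{1,1}(\mathrm{End}(E_i))$, and by part (1) these local forms patch to the globally defined $F_D$, yielding $F_D\in\Omega^{1,1}(\mathrm{End}(E))$. The main obstacle lies in the existence step of (1): the extra scalar correction $\beta_{ij}\cdot\mathrm{Id}_E$ must appear on the nose from the local computation, and this requires carefully tracking how the $B$-field normalisation interacts with the Hermitian gluing. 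Once this matching is in place, uniqueness and part (2) are essentially formal consequences.
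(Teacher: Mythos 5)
The paper does not actually prove this lemma --- it is quoted from Perego \cite{Perego21} with a citation and no argument --- so the comparison can only be between your proposal and what a correct proof must contain. Your uniqueness argument and your deduction of part (2) from part (1) are standard and fine. The problem is the existence step, and it sits exactly at the point you yourself flag as ``the main obstacle'': you assert that the family of classical Chern connections of $(E_i,\bar\partial_{E_i},H_i)$ satisfies the twisted transition rule $A_i=\varphi_{ij}^{-1}A_j\varphi_{ij}+\varphi_{ij}^{-1}d\varphi_{ij}+\beta_{ij}\cdot\mathrm{Id}_E$, with the $\beta_{ij}\cdot\mathrm{Id}_E$ term ``appearing on the nose'' from the local computation. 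It does not appear. On $U_{ij}$ the map $\varphi_{ij}$ is a holomorphic isometry from $(E_j,H_j)$ to $(E_i,H_i)$, so by uniqueness of the classical Chern connection it intertwines $D_{H_j}$ and $D_{H_i}$ exactly, and the connection matrices satisfy $A_i=\varphi_{ij}^{-1}A_j\varphi_{ij}+\varphi_{ij}^{-1}d\varphi_{ij}$ with no scalar correction. One can also see this directly: $A_i$ is determined by $H_i$ and the holomorphic frame alone, and the gluing data you invoke in (a) and (b) contain no trace of the $B$-field; your item (c), the relation $B_i-B_j=d\beta_{ij}$, is an identity among $2$-forms and never enters the computation of the connection $1$-forms. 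Hence the naive family of local Chern connections is a connection for the trivial $B$-field, not for the given one, and your construction does not produce an object satisfying the transition rule used in this paper.

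Closing the gap requires genuinely building the $B$-field data into the local connections, e.g.\ taking $D_i=D_{H_i}+\gamma_i\cdot\mathrm{Id}_{E_i}$ for suitable scalar $(1,0)$-forms $\gamma_i$ adapted to the cochain $\{\beta_{ij}\}$, and then one must also explain in what sense the corrected connection is still compatible with $H$, since adding a nonzero scalar $(1,0)$-form destroys strict metric compatibility ($\gamma_i+\bar\gamma_i\neq 0$ unless $\gamma_i=0$). This bookkeeping is the entire content of Perego's Lemma 2.25 and it also feeds into part (2): the corrected local curvature is $F_{H_i}+d\gamma_i\cdot\mathrm{Id}-B_i\cdot\mathrm{Id}$, and one needs the $(2,0)$-parts to cancel, which is precisely where the hypothesis that each $B_i$ is of type $(1,1)$ interacts with the choice of the $\gamma_i$. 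As written, your argument establishes the lemma only when $\beta_{ij}\equiv 0$, i.e.\ for untwisted bundles.
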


Let $F_H\in \Omega^{1,1}(\mbox{End}(E))$ be the curvature form of the Chern connection $D_H$, then we have
\begin{equation*}
    F_H|_{U_i}=F_{H_i}-B_i\cdot {\rm Id}_{E_i},
\end{equation*}
where $F_{H_i}$ is the curvature form of the Chern connection $D_{H_i}$ on $E_i$. A Hermitian metric $H=\{H_i\}_{i\in I}$ in $E=\{E_i,\varphi_{ij}\}_{i,j\in I}$ is called a  Hermitian--Einstein  metric if the curvature $F_H$ of the Chern connection $D_H$ satisfies
\begin{equation}
    \sqrt{-1}\Lambda_{\omega}F_H=\lambda\cdot {\rm Id}_E,
\end{equation}
where $\Lambda_{\omega}$ denotes the contraction of differential forms by $\omega$, and $\lambda$ is a real constant.

An $\alpha$-twisted holomorphic vector bundle $E=\{E_i,\varphi_{ij}\}_{i,j\in I}$ is said to be admitting an approximate Hermitian--Einstein structure if for every $\varepsilon>0$, there is a Hermitian metric $H_{\varepsilon}=\{H_{i,\varepsilon}\}_{i\in I}$ such that
\begin{equation}
    \max_{X}|\sqrt{-1}\Lambda_{\omega}F_{H_{\varepsilon}}-\lambda\cdot {\rm Id}_{E}|_{H_{\varepsilon}}<\varepsilon.
\end{equation}

\subsection{Stability}
Let $E=\{E_i,\varphi_{ij}\}_{i,j\in I}$ be an $\alpha$-twisted holomorphic vector bundle of rank $r$ and $H=\{H_i\}_{i\in I}$ be a Hermitian metric on $E$. Then the first Chern form of $(E,H)$ is
\begin{equation}
    c_1(E,H)=\frac{\sqrt{-1}}{2\pi}{\rm tr}(F_H)\in \Omega^{1,1}(M).
\end{equation}
The second Chern form of $(E,H)$ is
\begin{equation}
    c_2(E,H)=-\frac{1}{8\pi^2}\Big(({\rm tr}F_{H})^2-{\rm tr}(F_H\wedge F_H)\Big).
\end{equation}
The degree of $(E,H)$ with respect to the Gauduchon metric $\omega$ on $M$ is defined to be
\begin{equation}
    \deg_{\omega}(E):=\int_{M}c_1(E,H)\wedge \frac{\omega^{n-1}}{(n-1)!}.
\end{equation}
Since $\omega$ is a Gauduchon metric, the definition of the degree is well-defined and independent of the choice of the Hermitian metric $H$.

Let $\mathscr{F}=\{\mathscr{F}_i,\varphi_{ij}\}_{i,j\in I}$ be an $\alpha$-twisted coherent sheaf of rank $s$ on $M$. The definition of stability for twisted coherent sheaves was first introduced by Lieblich (\cite{Lieb07}) and  A. Perego (\cite{Perego19,Perego21}) studied the stability of twisted coherent sheaves in the language of twisted gluing of coherent sheaves. Given a rank $s$ $\alpha$-twisted coherent sheaves, the determinant of $\mathscr{F}=\{\det(\mathscr{F}_{i}),\det(\varphi_{ij})\}_{i,j\in I}$ is a locally free $\alpha^s$-twisted sheaf of rank $1$. The $\omega$-degree of $\mathscr{F}$ is defined by
\begin{equation}
    \deg_{\omega}(\mathscr{F}):=\deg_{\omega}(\det(\mathscr{F})).
\end{equation}
If $\mathscr{F}$ is non-trivial, the slope of $\mathscr{F}$ with respect to $\omega$ is defined by
\begin{equation}
    \mu_{\omega}(\mathscr{F})=\frac{\deg_{\omega}(\mathscr{F})}{{\rm rank}(\mathscr{F})}.
\end{equation}
Let $E=\{E_i,\varphi_{ij}\}_{i,j\in I}$ be an $\alpha$-twisted holomorphic vector bundle on $M$. We say an $\alpha$-twisted holomorphic vector bundle $E$ is $\omega$-stable (resp. $\omega$-semi-stable) if for every proper $\alpha$-twisted coherent subsheaf $\mathscr{F}$ of $E$, it holds that
\begin{equation}
    \mu_{\omega}(\mathscr{F})<\mu_{\omega}(E) \quad (\text{resp.}\quad \mu_{\omega}(\mathscr{F})\leq \mu_{\omega}(E)).
\end{equation}

\subsection{Endomorphisms of twisted vector bundles}
In the rest of this section, we will introduce the endomorphisms of  twisted vector bundles. Let $E=\{E_i,\varphi_{ij}\}_{i,j\in I}$ be an $\alpha$-twisted vector bundle on $M$. An endomorphism $f:E\to E$ is  a family $f=\{f_i\}_{i\in I}$, where $f_i:E_i\to E_i$ is an endomorphism of $E_i$ on $U_i$ and for every $i,j\in I$,
\begin{equation}
    \varphi_{ij}\circ f_i=f_j\circ \varphi_{ij}.
\end{equation}
It is easy to check that the eigenvalues of $f$ are smooth functions on the whole $M$. In fact, if $\lambda_i$ is a smooth function on $U_i$ and $\xi\in \Gamma(E_i)$ is  a nowhere vanishing smooth section such that $f_i(\xi)=\lambda_i\xi$, then
\begin{equation*}
    f_j(\varphi_{ij}(\xi))=\varphi_{ij}(f_j(\xi))=\varphi_{ij}(\lambda_i\xi)=\lambda_i\varphi_{ij}(\xi).
\end{equation*}
So $\lambda_i$ is an eigenvalue for $f_j$ with a nowhere vanishing smooth section $\varphi_{ij}(\xi)$. Hence the eigenvalues of $f_i$ glue together to give global functions on $M$.

Given a Hermitian metric $H=\{H_i\}_{i\in I}$ on $E=\{E_i,\varphi_{ij}\}_{i,j\in I}$, we say that an endomorphism $f$ is self-adjoint with respect to $H$, if for every $i\in I$ and $\xi,\eta\in \Gamma(E_i)$, it holds
\begin{equation}\label{selfadj}
    H_i(f_i(\xi),\eta)=H_i(\xi,f_i(\eta)).
\end{equation}
We denote
\begin{equation*}
    \text{Herm}(E,H)=\{f\in \mbox{End}(E)| f^{*H}=f\},
\end{equation*}
and
\begin{equation*}
    \text{Herm}^+(E,H)=\{f\in \text{Herm}(E,H)| f>0\},
\end{equation*}
where $f>0$ means that all the eigenvalues of $f$ are positive. If $H=\{H_i\}_{i\in I}$ and $K=\{K_i\}_{i\in I}$ are two Hermitian metrics on $\alpha$-twisted vector bundles, then for every $i\in I$ and $\xi,\eta\in \Gamma(E_i)$, the endomorphism $h=\{h_i\}_{i\in I}$  is defined by
\begin{equation*}
    H_i(\xi,\eta)=K_i(h_i\xi,\eta).
\end{equation*}
It is easy to see that $h=K^{-1}H$ is self-adjoint with respect to $H$ and $K$.

For any $h\in\text{Herm}^{+}(E,K)$, we can choose an open dense subset $W\subset M$  satisfying that, at each $x_0\in W$ there exists an open neighborhood $U_i$ of $x_0$, a local unitary basis $\{e_j\}_{j=1}^r$ with respect to $K_i$ and functions $\{\lambda_j\in C^{\infty}(U_i,\mathbb{R})\}_{i=1}^r$ such that
\begin{equation*}
    h(x)=\sum_{j=1}^re^{\lambda_j(x)}e_j(x)\otimes e^j(x)
\end{equation*}
for all $x\in U_i$, where $\{e^j\}_{j=1}^r$ is the dual basis corresponding to $\{e_j\}_{j=1}^r$. Then according to \cite[Lemma 2.56]{Perego21}, we have
\begin{equation*}
    \log h(x)=\sum_{j=1}^r\lambda_j(x)e_j(x)\otimes e^j(x).
\end{equation*}
Suppose $\Psi:\mathbb{R}\times \mathbb{R}\to \mathbb{R}$ is a smooth function, $A=\sum_{i,j=1}^rA_{ij}e^i\otimes e_j\in \mbox{End}(E)$, and $s\in \text{Herm}(E,K)$. Then we define $\Psi(s)(A)$ by
\begin{equation*}
    \Psi(s)(A)=\sum_{i,j=1}^n\Psi(\lambda_i,\lambda_j)A_{ij}e_i\otimes e^j.
\end{equation*}

\section{Proof of Theorem \ref{maintheorem}}\label{sec:Pro}

Let $E=\{E_i,\varphi_{ij}\}_{i,j\in I}$ be an $\alpha$-twisted holomorphic vector bundle of rank $r$ over a compact Gauduchon manifold $(M,\omega)$. Suppose $H=\{H_i\}_{i\in I}$ and $K=\{K_i\}_{i\in I}$ are two Hermitian metrics on $E$. Let $D_H=\{D_{H_i}\}_{i\in I}$ (resp. $D_K=\{D_{K_i}\}_{i\in I}$) be the Chern connection of $(E,H)$ (resp. $(E,K)$). Then for every $i\in I$, we have
\begin{equation}
    \partial_{H_i}=\partial_{K_i}+h_i^{-1}\partial_{K_i}h_i,
\end{equation}
where $\partial_{K_i}$ denotes the $(1,0)$-part of the connection induced by $D_{K_i}$ on $\mbox{End}(E_i)$. The relation between $F_{H_i}$ and $F_{K_i}$ is given by
\begin{equation}
    \begin{split}
        F_{H_i}&=F_{K_i}+\bar{\partial}_i(h_i^{-1}\partial_{K_i}h_i).
    \end{split}
\end{equation}
Then we have
\begin{equation}
    \begin{split}
        F_H|_{U_i}=&F_{H_i}-B_i\cdot {\rm Id}_{E_i}=F_{K_i}+\bar{\partial}_i(h_i^{-1}\partial_{K_i}h_i)-B_i\cdot {\rm Id}_{E_i}\\
        =&F_{K}|_{U_i}+\bar{\partial}_i(h_i^{-1}\partial_{K_i}h_i).
    \end{split}
\end{equation}
So if we glue together, it follows that
\begin{equation}
    F_H=F_K+\bar{\partial}(h^{-1}\partial_K h).
\end{equation}
Therefore $H$ solves the Hermitian--Einstein equation on $\alpha$-twisted holomorphic bundle $E$ if and only if there exist  Hermitian metric $K$ and $h\in \text{Herm}^+(E,K)$ such that
\begin{equation}
    \sqrt{-1}\Lambda_{\omega}F_K-\lambda \cdot {\rm Id}_E+\sqrt{-1}\Lambda_{\omega}\bar{\partial}(h^{-1}\partial_Kh)=0.
\end{equation}

Now, fixing a proper background Hermitian metric $K=\{K_i\}_{i\in I}$ on $\alpha$-twisted holomorphic vector bundle $E=\{E_i,\varphi_{ij}\}_{i,j\in I}$, we consider the following perturbed equation
\begin{equation}\label{pereq}
\begin{split}
    L_{\varepsilon}(h_{\varepsilon}):&=\sqrt{-1}\Lambda_{\omega}F_{H_{\varepsilon}}-\lambda \cdot {\rm Id}_{E}+\varepsilon \log(h_{\varepsilon})\\
    &=\sqrt{-1}\Lambda_{\omega}F_K-\lambda\cdot {\rm Id}_E+\sqrt{-1}\Lambda_{\omega}\bar{\partial}(h_{\varepsilon}^{-1}\partial_Kh_{\varepsilon})+\varepsilon\log(h_{\varepsilon}),
\end{split}
\end{equation}
where $h_{\varepsilon}=K^{-1}H_{\varepsilon}\in \text{Herm}^+(E,K)$ and $\varepsilon\in (0,1]$. It is obvious that $h_{\varepsilon}$ and $\log h_{\varepsilon}$ are self-adjoint with respect to $K$ and $H_{\varepsilon}$ in the sense of (\ref{selfadj}). In \cite{Perego21}, Perego proved that the equation (\ref{pereq}) is solvable for all $\varepsilon \in (0,1]$ by using the continuity method. If the $\alpha$-twisted holomorphic vector bundle $E$ is semi-stbale, we can show that
\begin{equation}\label{claim}
    \lim_{\varepsilon\to 0}\varepsilon \max_{M}|\log h_{\varepsilon}|_{K}=0.
\end{equation}
This implies that $\max_{M}|\sqrt{-1}\Lambda_{\omega}F_{H_{\varepsilon}}-\lambda \cdot {\rm Id}_{E}|\to 0$ as $\varepsilon\to 0$. So we obtain the existence of approximate Hermitian--Einstein structure on $E$.

Before giving the detailed proof of (\ref{claim}), we need the following two lemmas.

\begin{lem}[Lemma 5.7 in \cite{Perego21}]\label{lem3:1}
Let $E=\{E_i,\varphi_{ij}\}_{i,j\in I}$ be an $\alpha$-twisted holomorphic vector bundle on $M$, and $K$ be a Hermitian metric on $E$. If $h\in \text{Herm}^{+}(E,K)$ satisfies $L_{\varepsilon}(h)=0$ for some $\varepsilon>0$.
\begin{enumerate}
    \item [(1)]  We have
    \begin{equation}
        \frac{1}{2}\sqrt{-1}\Lambda_{\omega}\bar{\partial}\partial(|\log(h)|^2)+\varepsilon|\log(h)|_{K}^2\leq |\Phi(K)|_{K}\cdot |\log(h)|_{K},
    \end{equation}
    where $\Phi(K):=\sqrt{-1}\Lambda_{\omega}F_K-\lambda\cdot {\rm Id}_E$.
    \item[(2)] If $m:=\max_{M}|\log(h)|_{K}(x)$, then we have
    \begin{equation}
        m\leq \frac{1}{\varepsilon}\max_{x\in M}|\Phi(K)|_{K}(x).
    \end{equation}
    \item[(3)] There is a real number  $C$ (depending only on $\omega$ and $K$) such that
    \begin{equation}
        m\leq C(||\log h||_{L^2}+\max_{M}|\Phi(K)|_{K}).
    \end{equation}
\end{enumerate}
\end{lem}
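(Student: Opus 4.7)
The plan is to follow the Donaldson--Simpson strategy, suitably adapted to the twisted Gauduchon setting. Set $s:=\log h\in\text{Herm}(E,K)$, which is well-defined by the functional calculus for positive self-adjoint endomorphisms recalled at the end of Section \ref{sec:Pre}. Since $L_\varepsilon(h)=0$, the perturbed equation may be rewritten as
$$\Phi(K)+\sqrt{-1}\Lambda_\omega\bar{\partial}(h^{-1}\partial_K h)+\varepsilon s=0.$$
Taking the pointwise $K$-inner product of this identity with $s$, and invoking the standard Simpson-type pointwise inequality
$$\bigl\langle\sqrt{-1}\Lambda_\omega\bar{\partial}(h^{-1}\partial_K h),\,s\bigr\rangle_K\;\geq\;\tfrac{1}{2}\sqrt{-1}\Lambda_\omega\bar{\partial}\partial|s|_K^{2},$$
(derived by working in a local $K$-unitary frame diagonalizing $h$ and using the nonnegative kernel $\Psi(\lambda,\mu):=(e^{\mu-\lambda}-1)/(\mu-\lambda)$ from the functional calculus on $\text{Herm}(E,K)$ introduced at the end of Section \ref{sec:Pre}), together with the Cauchy--Schwarz bound $\langle\Phi(K),s\rangle_K\leq|\Phi(K)|_K\cdot|s|_K$, yields (1).

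For (2) I would apply the maximum principle to the smooth scalar function $|s|_K^{2}$: at a point $x_0$ where this function attains its maximum, the complex Hessian is negative semidefinite, so
$$\sqrt{-1}\Lambda_\omega\bar{\partial}\partial|s|_K^{2}(x_0)\;\geq\;0$$
(the sign being correct because $\bar{\partial}\partial=-\partial\bar{\partial}$). Inserting this into (1) and dividing through by $|s|_K(x_0)=m$ (the case $m=0$ being trivial) gives $\varepsilon m\leq|\Phi(K)|_K(x_0)\leq\max_M|\Phi(K)|_K$.

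For (3) I would first upgrade (1) from $|s|_K^{2}$ to $|s|_K$ via the Kato-type inequality
$$|s|_K\cdot\sqrt{-1}\Lambda_\omega\bar{\partial}\partial|s|_K\;\leq\;\tfrac{1}{2}\sqrt{-1}\Lambda_\omega\bar{\partial}\partial|s|_K^{2},$$
valid pointwise where $|s|_K>0$ and globally in the distributional sense. Combined with (1) this gives $\sqrt{-1}\Lambda_\omega\bar{\partial}\partial|s|_K\leq|\Phi(K)|_K$ on $M$. Because $\omega$ is Gauduchon, $\sqrt{-1}\Lambda_\omega\bar{\partial}\partial$ is, up to sign and a positive constant, a scalar second-order elliptic operator that is formally self-adjoint with respect to the $L^{2}$-pairing induced by $\omega^{n}/n!$. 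Hence a standard Moser iteration (equivalently, the mean-value inequality on a compact Riemannian manifold) applied to the nonnegative subsolution $|s|_K$ with the bounded forcing $|\Phi(K)|_K\in L^{\infty}(M)$ produces the desired estimate $m\leq C(\||s|_K\|_{L^{2}}+\max_M|\Phi(K)|_K)$ for some $C=C(\omega,K)$.

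The main obstacle is the pointwise Simpson-type inequality invoked in (1): its verification requires a careful local computation at an arbitrary point, using the functional calculus on $\text{Herm}(E,K)$ and a $K$-unitary diagonalizing frame for $h$, to unpack $\bar{\partial}(h^{-1}\partial_K h)$ in terms of $\bar{\partial}\partial_K s$ plus a manifestly nonnegative quadratic term. One must also check that the twisting data and the $B$-field correction $B_i\cdot{\rm Id}_{E_i}$ to $F_{H_i}$ do not interfere with the argument; this is harmless because such contributions are scalar, commute with every endomorphism, and cancel in the difference between the two occurrences of the curvature in $L_\varepsilon(h)=0$, so the computation reduces pointwise to the familiar untwisted Gauduchon case.
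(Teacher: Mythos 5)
The paper does not actually prove this lemma --- it imports it verbatim as Lemma 5.7 of \cite{Perego21} --- so your argument has to be judged on its own merits. Your treatment of (1) and (2) is sound and is the standard route: the pointwise inequality $\langle\sqrt{-1}\Lambda_\omega\bar{\partial}(h^{-1}\partial_K h),s\rangle_K\geq\tfrac{1}{2}\sqrt{-1}\Lambda_\omega\bar{\partial}\partial|s|_K^2$ is exactly the content of the local diagonalization computation with the kernel $\Psi$ (the same computation carried out in the proof of the key integral identity later in Section \ref{sec:Pro}), your remark that the $B$-field terms are scalar and cancel in $F_H-F_K$ is correct, and the maximum-principle step for (2), applied to the smooth function $|s|_K^2$, is fine.

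Part (3) contains a genuine gap: your ``Kato-type inequality'' points the wrong way. Writing $P:=\sqrt{-1}\Lambda_\omega\bar{\partial}\partial$ (your convention, for which $P(u)\geq 0$ at an interior maximum) and $u:=|s|_K$, one has pointwise
\begin{equation*}
\tfrac{1}{2}P(u^2)=uP(u)+\sqrt{-1}\Lambda_\omega(\bar{\partial}u\wedge\partial u)=uP(u)-|\partial u|_\omega^2,
\end{equation*}
so $uP(u)\geq\tfrac{1}{2}P(u^2)$, not $\leq$. Hence (1) gives only a lower bound for $|s|_K\,P(|s|_K)$, and the inequality $P(|s|_K)\leq|\Phi(K)|_K$ does not follow from (1) by this route. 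The standard repair (Simpson, Lemma 3.1(d); L\"ubke--Teleman) is a separate local computation establishing the weak inequality $P(|s|_K)\leq|\Phi(K)|_K+|\Phi(H)|_H$; since $L_\varepsilon(h)=0$ gives $\Phi(H)=-\varepsilon s$ with $|\Phi(H)|_H=\varepsilon|s|_K$, part (2) bounds this by $\max_M|\Phi(K)|_K$, so $P(|s|_K)\leq 2\max_M|\Phi(K)|_K$ weakly, and Moser iteration (which needs only uniform ellipticity of $P$, not the self-adjointness you attribute to the Gauduchon condition --- Gauduchon only yields $\int_M P(u)\,\omega^n/n!=0$) then gives (3). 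Note that this makes (3) logically dependent on (2), a dependence your argument misses.
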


\begin{lem}\label{keyid}
Let $E=\{E_i,\varphi_{ij}\}_{i,j\in I}$ be an $\alpha$-twisted holomorphic vector bundle on $M$, and $K=\{K_i\}_{i\in I}$ be a Hermitian metric on $E$. If $h_{\varepsilon}=\{h_{i,\varepsilon}\}_{i\in I}\in \text{Herm}^+(E,K)$ solves (\ref{pereq}) for some $\varepsilon$, then it holds that
\begin{equation}\label{keyid}
    \int_{M}{\rm tr}(\Phi(K)s_{\varepsilon})\frac{\omega^{n}}{n!}+\int_{M}\langle\Psi(s_{\varepsilon}(\bar{\partial}s_{\varepsilon}),\bar{\partial}s_{\varepsilon})\rangle_{K}\frac{\omega^n}{n!}=-\varepsilon ||s_{\varepsilon}||_{L^2}^2,
\end{equation}
where $s_{\varepsilon}=\{s_{i,\varepsilon}\}_{i\in I}=\log h_{\varepsilon}$, $\Phi(K)=\sqrt{-1}\Lambda_{\omega}F_K-\lambda\cdot {\rm Id}_E$ and
\begin{equation*}
    \Psi(x,y)=\begin{cases}
    \frac{e^{y-x}-1}{y-x},&x\neq y;\\
    1,&x=y.
\end{cases}
\end{equation*}
\end{lem}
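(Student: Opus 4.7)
The plan is to pair the perturbed equation $L_\vps(h_\vps)=0$ on the right with $s_\vps=\log h_\vps$, take the trace, and integrate against $\og^n/n!$. The $\Phi(K)$ term reproduces the first summand of the claimed identity, and the perturbation piece $\vps\log h_\vps=\vps s_\vps$ contributes $\vps\int_M\tr(s_\vps^2)\,\og^n/n!=\vps\|s_\vps\|_{L^2}^2$ on the left-hand side, which after transposition yields exactly $-\vps\|s_\vps\|_{L^2}^2$ on the right. The entire lemma thus reduces to establishing
\begin{equation*}
\int_M\tr\bigl(\sqrt{-1}\Lambda_\og\bp(h_\vps^{-1}\pt_K h_\vps)\cdot s_\vps\bigr)\frac{\og^n}{n!}=\int_M\lag\Psi(s_\vps)(\bp s_\vps),\bp s_\vps\rag_K\frac{\og^n}{n!}.
\end{equation*}

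Set $\theta:=h_\vps^{-1}\pt_K h_\vps$. The integral formula $\pt_K e^{s_\vps}=\int_0^1 e^{ts_\vps}(\pt_K s_\vps)e^{(1-t)s_\vps}\,dt$, evaluated at a point in a frame diagonalizing $s_\vps$, yields the pointwise identity $\theta=\Psi(s_\vps)(\pt_K s_\vps)$. Since $\theta$ is a one-form, the graded Leibniz rule gives $\bp(\theta\,s_\vps)=(\bp\theta)\,s_\vps-\theta\wedge\bp s_\vps$, so $\tr((\bp\theta)\,s_\vps)=\bp\tr(\theta s_\vps)+\tr(\theta\wedge\bp s_\vps)$. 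Using the standard identity $\tr(\sqrt{-1}\Lambda_\og\eta)\,\og^n/n!=\sqrt{-1}\tr(\eta)\wedge\og^{n-1}/(n-1)!$ for $\eta\in\Omega^{1,1}(\tend(E))$, the left-hand side above splits into
\begin{equation*}
I_1:=\int_M\sqrt{-1}\,\bp\tr(\theta s_\vps)\wedge\frac{\og^{n-1}}{(n-1)!},\qquad I_2:=\int_M\sqrt{-1}\,\tr(\theta\wedge\bp s_\vps)\wedge\frac{\og^{n-1}}{(n-1)!}.
\end{equation*}

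To handle $I_1$, a pointwise diagonalization of $s_\vps$ combined with $\Psi(\lmd_i,\lmd_i)=1$ shows $\tr(\theta s_\vps)=\tr(\pt_K s_\vps\cdot s_\vps)=\tfrac{1}{2}\pt|s_\vps|_K^2$, so $I_1=\tfrac{\sqrt{-1}}{2}\int_M\bp\pt|s_\vps|_K^2\wedge\og^{n-1}/(n-1)!$. A two-step integration by parts combined with the Gauduchon condition $\pt\bp\og^{n-1}=0$ then yields the standard identity $\int_M\sqrt{-1}\,\bp\pt f\wedge\og^{n-1}=0$ for every $f\in C^\infty(M)$, showing $I_1=0$. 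For $I_2$, substituting $\theta=\Psi(s_\vps)(\pt_K s_\vps)$ and using the self-adjointness relation $\pt_K s_\vps=(\bp s_\vps)^{*_K}$ (a direct consequence of $(D_K s_\vps)^{*_K}=D_K s_\vps$ for self-adjoint $s_\vps$), the integrand at a point of an orthonormal frame diagonalizing $s_\vps$ expands to $\sum_{i,j}\Psi(\lmd_i,\lmd_j)\,|(\bp s_\vps)_{ij}|_\og^2\,\og^n/n!$ via the pointwise formula $\sqrt{-1}\overline{\alpha}\wedge\alpha\wedge\og^{n-1}/(n-1)!=|\alpha|_\og^2\og^n/n!$ for $(0,1)$-forms $\alpha$; by the very definition of the Hermitian inner product on $\tend(E)$-valued $(0,1)$-forms this is $\lag\Psi(s_\vps)(\bp s_\vps),\bp s_\vps\rag_K\og^n/n!$, as required.

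The main obstacle is the vanishing of $I_1$: in the K\"ahler setting $d\og^{n-1}=0$ reduces it to a one-line Stokes argument, but in the present Gauduchon setting $d\og^{n-1}\ne 0$ in general, so one must execute the two-step integration by parts described above in order to exploit only the weaker Gauduchon identity $\pt\bp\og^{n-1}=0$. Every other step is local linear algebra that follows directly from the definition of $\Psi(s)$ and the self-adjointness of $s_\vps$.
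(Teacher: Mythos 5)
Your proof is correct and follows essentially the same route as the paper: pair the perturbed equation with $s_{\varepsilon}$, integrate, and reduce everything to the identity
$\int_{M}\langle \sqrt{-1}\Lambda_{\omega}\bar{\partial}(h_{\varepsilon}^{-1}\partial_K h_{\varepsilon}),s_{\varepsilon}\rangle_K\frac{\omega^n}{n!}=\int_{M}\langle\Psi(s_{\varepsilon})(\bar{\partial}s_{\varepsilon}),\bar{\partial}s_{\varepsilon}\rangle_{K}\frac{\omega^n}{n!}$,
which both arguments obtain by an integration by parts using only $\partial\bar{\partial}\omega^{n-1}=0$ followed by a pointwise diagonalization of $s_{\varepsilon}$ on a dense open set. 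The only difference is presentational: you carry out the integration-by-parts step (your vanishing of $I_1$) explicitly, whereas the paper outsources exactly that computation to Proposition 3.1 of Nie--Zhang \cite{NZ18}.
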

\begin{proof}
Since $h_{\varepsilon}$ solves (\ref{pereq}), then we have
\begin{equation}
    \langle \Phi(K),s_{\varepsilon}\rangle_{K}+\langle \sqrt{-1}\Lambda_{\omega}\bar{\partial}(h_{\varepsilon}^{-1}\partial_Kh_{\varepsilon}),s_{\varepsilon}\rangle_K=-\langle s_{\varepsilon},s_\varepsilon\rangle.
\end{equation}
Integrating over $M$ gives
\begin{equation}\label{l3eq2}
    \int_{M}{\rm tr}(\Phi(K)s_{\varepsilon})\frac{\omega^n}{n!}+\int_{M}\langle \sqrt{-1}\Lambda_{\omega}\bar{\partial}(h_{\varepsilon}^{-1}\partial_Kh_{\varepsilon}),s_{\varepsilon}\rangle_K\frac{\omega^n}{n!}=-||s_{\varepsilon}||_{L^2}.
\end{equation}
Then comparing (\ref{l3eq2}) with (\ref{keyid}), it suffices to prove
\begin{equation}
    \int_{M}\langle \sqrt{-1}\Lambda_{\omega}\bar{\partial}(h_{\varepsilon}^{-1}\partial_Kh_{\varepsilon}),s_{\varepsilon}\rangle_K\frac{\omega^n}{n!}=\int_{M}\langle\Psi(s_{\varepsilon}(\bar{\partial}s_{\varepsilon}),\bar{\partial}s_{\varepsilon})\rangle_{K}\frac{\omega^n}{n!}.
\end{equation}
An easy calculation (see Proposition 3.1 in \cite{NZ18} for details) shows that
\begin{equation}\label{l3eq3}
   \int_{M}\langle \sqrt{-1}\Lambda_{\omega}\bar{\partial}(h_{\varepsilon}^{-1}\partial_Kh_{\varepsilon}),s_{\varepsilon}\rangle_K\frac{\omega^n}{n!}=\int_{M}\sqrt{-1}{\rm tr}(h_{\varepsilon}^{-1}\partial_Kh_{\varepsilon}\wedge \bar{\partial}s_{\varepsilon})\frac{\omega^{n-1}}{(n-1)!}.
\end{equation}

On the other hand, following the methods in \cite{LT06}, we can choose an open dense subset $W\subset M$ satisfying that at each $x\in W$ there exists an open neighborhood $U_i\in\mathscr{U}$ of $x$, a local unitary basis $\{e_j\}_{j=1}^r$ of $E_i$ with respect to $K_i$ and eigenvalues $\{\lambda_j\in C^{\infty}(U_i,\mathbb{R})\}_{j=1}^r$ such that
\begin{equation*}
    s_{i,\varepsilon}(y)=\sum_{j=1}^r\lambda_j(y)e_j(y)\otimes e^j(y),
\end{equation*}
for all $y\in U_i$, where $\{e^j\}_{j=1}^r$ is the dual basis of  $E_i^*$. Here $\lambda_1,\cdots, \lambda_r$ are eigenvalues of $s_{\varepsilon}$ (and hence of $s_{i,\varepsilon}$). Therefore, we have
\begin{equation*}
\partial_{K_i}h_{i,\varepsilon}(x)=e^{\lambda_j}\partial \lambda_je_j\otimes e^j+(e^{\lambda_k}-e^{\lambda_j})A^j_ke_j\otimes e^k,
\end{equation*}
and
\begin{equation*}
    \bar{\partial}s_{i,\varepsilon}(x)=\bar{\partial}\lambda_je_j\otimes e^j+(\lambda_k-\lambda_j)(-\overline{A_j^k})e_j\otimes e^k,
\end{equation*}
where $\{A_j^k\}_{j,k=1}^r$ are the $(1,0)$-forms defined defined by $\partial_{K_i}e_j=A_j^ke_k$. Then on each $U_i$, it follows that
\begin{equation*}
    \begin{split}
        &{\rm tr}\sqrt{-1}\Lambda_{\omega}(h_{i,\varepsilon}^{-1}\partial_{K_i}h_{i,\varepsilon}\wedge \bar{\partial}s_{i,\varepsilon})\\
        &=\sum_{j=1}^r|\bar{\partial}\lambda_j|^2+\sum_{j\neq k}(e^{\lambda_k}-e^{\lambda_j})(\lambda_j-\lambda_k)\sqrt{-1}\Lambda_{\omega}A^j_k \wedge (-\overline{A_j^k}) \\
        &=\sum_{j=1}^r|\bar{\partial}\lambda_j|^2+\sum_{j\neq k}\frac{e^{\lambda_k-\lambda_j}-1}{\lambda_k-\lambda_j}(\lambda_k-\lambda_j)^2|-\overline{A^j_k}|^2\\
        &=\sum_{j,k=1}^r\Psi(\lambda_j,\lambda_k)|(\bar{\partial}s_{i,\varepsilon})_j^k|^2.
    \end{split}
\end{equation*}
As both sides glue together, we have
\begin{equation*}
    {\rm tr}\sqrt{-1}\Lambda_{\omega}(h_{\varepsilon}^{-1}\partial_Kh_{\varepsilon}\wedge \bar{\partial}s_{\varepsilon})=\sum_{j,k=1}^r\Psi(\lambda_j,\lambda_k)|(\bar{\partial}s)_j^k|^2.
\end{equation*}
Using the similar argument, we can also prove
\begin{equation*}
    \langle\Psi(s)(\bar{\partial}s),\bar{\partial}s\rangle_K=\sum_{j,k=1}^r\Psi(\lambda_j,\lambda_k)|(\bar{\partial}s)_j^k|^2.
\end{equation*}
Therefore, we have
\begin{equation}\label{l3eq4}
     \langle\Psi(s)(\bar{\partial}s),\bar{\partial}s\rangle_K= {\rm tr}\sqrt{-1}\Lambda_{\omega}(h_{\varepsilon}^{-1}\partial_Kh_{\varepsilon}\wedge \bar{\partial}s_{\varepsilon}).
\end{equation}
Combining (\ref{l3eq3}) and (\ref{l3eq4}), we complete the proof.
\end{proof}

In the following, we are ready to prove Theorem \ref{maintheorem}. First, we show that the semi-stability implies the existence of approximate Hermitian--Einstein structure on $\alpha$-twisted holomorphic vector bundles. In fact, we have the following theorem.
\begin{thm}
Let $E=\{E_i,\varphi_{ij}\}_{i,j\in I}$ be an $\alpha$-twisted holomorphic vector bundle over $(M,\omega)$. If $E$ is $\omega$-semi-stable, then there is an approximate Hermitian--Einstein structure on $E$. i.e.,
\begin{equation}
    \max_{M}|\Phi(H_{\varepsilon})|_{H_{\varepsilon}}\to 0, \quad \text{as}  \quad \varepsilon \to 0.
\end{equation}
\end{thm}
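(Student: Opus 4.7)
The plan is to adapt the Uhlenbeck--Yau continuity method as implemented by Nie--Zhang \cite{NZ18} to the twisted Gauduchon setting. Since Perego \cite{Perego21} already solves the perturbed equation (\ref{pereq}) for every $\varepsilon \in (0,1]$, it suffices to prove the limit (\ref{claim}). Writing $s_\varepsilon := \log h_\varepsilon$ and $m_\varepsilon := \max_M|s_\varepsilon|_K$, Lemma \ref{lem3:1}(2) already gives $\varepsilon m_\varepsilon \leq \max_M|\Phi(K)|_K$ uniformly, so the issue is to rule out that $\varepsilon m_\varepsilon$ stays bounded away from $0$. Arguing by contradiction, a sequence $\varepsilon_i \to 0$ with $\varepsilon_i m_{\varepsilon_i} \geq \delta > 0$ must satisfy $m_{\varepsilon_i} \to \infty$.

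First I would fix the background metric $K$ so that its induced metric on $\det E$ is Hermitian--Einstein (possible by the rank-one Gauduchon case); this makes $\tr \Phi(K) \equiv 0$, and taking traces in (\ref{pereq}) together with the maximum principle forces $\tr s_{\varepsilon_i} \equiv 0$. Setting $u_i := s_{\varepsilon_i}/m_{\varepsilon_i}$ yields $K$-self-adjoint trace-free endomorphisms with $\|u_i\|_{L^\infty}=1$, while Lemma \ref{lem3:1}(3) gives $\|u_i\|_{L^2} \geq c > 0$. Dividing the identity of Lemma \ref{keyid} applied to $s_{\varepsilon_i}$ by $m_{\varepsilon_i}^2$ produces an identity whose $\Phi(K)$-term is $O(m_{\varepsilon_i}^{-1})$, whose right-hand side is $O(\varepsilon_i)$, and whose $\bar\partial u_i$-integrand carries the pointwise weight $m_{\varepsilon_i}\Psi(m_{\varepsilon_i}\mu_j,m_{\varepsilon_i}\mu_k)$ in an eigenframe. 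Monotonicity of this weight in $m_{\varepsilon_i}$, together with the lower bound $\Psi \geq 1$ on its positive side, yields a uniform $L^2_1$ bound on $u_i$; after passing to a subsequence, a weak $L^2_1$-limit $u_\infty \neq 0$ exists and is $K$-self-adjoint, trace-free, with $\|u_\infty\|_{L^\infty} \leq 1$.

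The next step is Simpson's test-function trick: for every smooth bounded $\phi:\mathbb{R}\times\mathbb{R}\to\mathbb{R}_{\geq 0}$ with $\phi(x,y)(y-x)<1$ whenever $y<x$, passing to the limit in the rescaled identity gives
\begin{equation*}
\int_M \tr\bigl(\Phi(K)u_\infty\bigr)\,\frac{\omega^n}{n!} + \int_M \langle \phi(u_\infty)(\bar\partial u_\infty),\bar\partial u_\infty\rangle_K\,\frac{\omega^n}{n!} \leq 0.
\end{equation*}
Standard manipulations then force the eigenvalues of $u_\infty$ to be constants a.e., and imply that the spectral projections $\pi_\alpha$ onto the sums of lower eigenspaces are weakly holomorphic sections of $\mbox{End}(E)$. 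Invoking the twisted Uhlenbeck--Yau regularity theorem of \cite{Perego21}, each $\pi_\alpha$ defines a proper $\alpha$-twisted coherent subsheaf $\mathscr{F}_\alpha \subsetneq E$; compatibility with the twisted gluing $\varphi_{ij}$ is automatic because $u_\infty$ is a global endomorphism of $E$ and the $\pi_\alpha$ are its spectral data.

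Finally, a Chern--Weil computation rewrites the left-hand side of the displayed inequality as $\sum_\alpha (\mu_{\alpha+1}-\mu_\alpha)\bigl(\mu_\omega(E)\,\mathrm{rank}(\mathscr{F}_\alpha) - \deg_\omega(\mathscr{F}_\alpha)\bigr)$. Semi-stability makes each bracket non-negative, while $u_\infty \neq 0$ with $\tr u_\infty = 0$ forces at least one coefficient $\mu_{\alpha+1} - \mu_\alpha$ to be strictly positive, producing the desired contradiction. The main technical obstacle is the passage from the weak $L^2_1$ limit $u_\infty$ to genuine $\alpha$-twisted coherent subsheaves: the Uhlenbeck--Yau regularity argument for weakly holomorphic subbundles and its glueing into a twisted coherent subsheaf must be transported from Perego's K\"ahler framework to the Gauduchon one. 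Following Nie--Zhang only local K\"ahler identities at a point enter that argument, so it transfers in principle, but carrying out the verification in the presence of the $B$-field and the twisted transition data $\varphi_{ij}$ is the most delicate portion of the proof.
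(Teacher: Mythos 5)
Your overall strategy is the same as the paper's (and Nie--Zhang's): solve the perturbed equation $L_\varepsilon(h_\varepsilon)=0$ via Perego, normalize $\tr\Phi(K)\equiv 0$ so that $\tr s_\varepsilon\equiv 0$, rescale $s_{\varepsilon_i}$, extract a weak $L^2_1$ limit $u_\infty$ by Simpson's monotonicity trick, produce spectral projections, invoke Perego's twisted Uhlenbeck--Yau regularity lemma, and contradict semi-stability via the twisted Gauss--Codazzi/Chern--Weil computation. The only structural difference is that you normalize by $m_{\varepsilon_i}=\max_M|s_{\varepsilon_i}|_K$ while the paper normalizes by $l_i=\|s_{\varepsilon_i}\|_{L^2}$; both are workable since Lemma \ref{lem3:1}(3) makes the two scales comparable.

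However, there is a genuine gap in your endgame: you have lost the strict negativity that drives the contradiction. Your limiting inequality reads
\begin{equation*}
\int_M \tr\bigl(\Phi(K)u_\infty\bigr)\,\frac{\omega^n}{n!} + \int_M \langle \phi(u_\infty)(\bar\partial u_\infty),\bar\partial u_\infty\rangle_K\,\frac{\omega^n}{n!} \leq 0,
\end{equation*}
and the Chern--Weil identity converts the left side into $\sum_\alpha(\mu_{\alpha+1}-\mu_\alpha)\,\mathrm{rank}(\mathscr{F}_\alpha)\bigl(\mu_\omega(E)-\mu_\omega(\mathscr{F}_\alpha)\bigr)$. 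Under semi-stability every summand is $\geq 0$, so ``$\leq 0$'' only forces the sum to vanish --- which is perfectly consistent with a strictly semi-stable $E$ having subsheaves of equal slope. Your final sentence (``at least one coefficient $\mu_{\alpha+1}-\mu_\alpha$ is strictly positive'') does not rescue this: a positive coefficient times a zero bracket is still zero. The missing ingredient is the quantitative lower bound coming from the contradiction hypothesis $\varepsilon_i m_{\varepsilon_i}\geq\delta$: the right-hand side of the rescaled identity of Lemma \ref{keyid} must be kept as $-\varepsilon_i l_i\leq -\delta/C+o(1)$ (equivalently $-\varepsilon_i m_{\varepsilon_i}\|u_i\|_{L^2}^2\leq -\delta c^2$ in your normalization), so that the limiting inequality carries an explicit $\delta/C$ on the left, as in the paper's (\ref{t3eq11})--(\ref{t3eq12}) and (\ref{t3eq17}), yielding the strict inequality $\sum_\alpha(\cdots)<0$. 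Your write-up discards exactly this term: dividing the identity by $m_{\varepsilon_i}^2$ makes the right-hand side $O(\varepsilon_i)\to 0$ (and, incidentally, produces the weight $\Psi(m\mu_j,m\mu_k)$ rather than the weight $m\Psi(m\mu_j,m\mu_k)$ that Simpson's monotone-convergence lemma requires); you should divide by $m_{\varepsilon_i}$ only once and retain the resulting strictly negative bound. With that correction the argument closes as in the paper.
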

\begin{proof}
Let $K$ be a background Hermitian metric on $E$. Without loss of generality, we can assume that the Hermitian $K$ satisfies
\begin{equation*}
    {\rm tr}(\sqrt{-1}\Lambda_{\omega}F_K-\lambda\cdot {\rm Id}_E)=0.
\end{equation*}
In fact, by an approximate conformal change $K=e^{\phi}\tilde{K}$, we have
\begin{equation}\label{t3eq1}
    \sqrt{-1}\Lambda_{\omega}\bar{\partial}\partial\phi=-\frac{1}{r}{\rm tr}(\sqrt{-1}\Lambda_{\omega}F_{\tilde{K}}-\lambda\cdot {\rm Id}_E),
\end{equation}
where $\tilde{K}$ is an arbitrary Hermitian metric on $E$. Since $\int_M{\rm tr}(\sqrt{-1}\Lambda_{\omega}F_{\tilde{K}}-\lambda\cdot {\rm Id}_E)\frac{\omega^n}{n!}=0$, then there is a smooth function $\phi$ satisfying (\ref{t3eq1}).

Let $\{h_{\varepsilon}\}_{0<\varepsilon\leq 1}$ be the solutions of the perturbed equation (\ref{pereq}) with the background Hermitian metric $K$. Then we have
\begin{equation*}
    ||\log h_{\varepsilon}||_{L^2}^2=-\frac{1}{\varepsilon}\int_{M}\langle \Phi(H_{\varepsilon}), \log h_{\varepsilon}\rangle_{H_{\varepsilon}}\frac{\omega^n}{n!},
\end{equation*}
where $\Phi(H_{\varepsilon})=\sqrt{-1}\Lambda_{\omega}F_{H_{\varepsilon}}-\lambda\cdot {\rm Id}_E$.
By \cite[Lemma 5.2]{Perego21}, we also have
\begin{equation}\label{t3eq2}
    \det(h_{\varepsilon})=1.
\end{equation}
We will consider the limit behavior of $||\log h_{\varepsilon}||_{L^2}$ in two cases.

{\bf Case 1}: $||\log h_{\varepsilon}||_{L^2}$ is bounded. i.e., there exists a constant $C_1$ such that
\begin{equation*}
    ||\log h_{\varepsilon}||_{L^2}<C_1<+\infty.
\end{equation*}
In this case, from Lemma \ref{lem3:1}, we have
\begin{equation*}
\begin{split}
    \max_{M}|\Phi(H_{\varepsilon})|_{H_{\varepsilon}}&=\varepsilon\cdot \max_{M}|\log h_{\varepsilon}|_{H_{\varepsilon}}<\varepsilon C\cdot(||\log h_{\varepsilon}||_{L^2}+\max_{M}|\Phi(K)|_{K})\\
    &\leq \varepsilon C(C_1+\max_M|\Phi(K)|_{K}) \to 0
    \end{split}
\end{equation*}
as $\varepsilon \to 0$.

{\bf Case 2}: $\limsup_{\varepsilon\to 0}||\log h_{\varepsilon}||_{L^2}=+\infty$. In this case, if $E=\{E_i,\varphi_{ij}\}_{i,j\in I}$ is  $\omega$-semi-stable, then we also have
\begin{equation}\label{t3eq3}
  \lim_{\varepsilon\to 0}\max_{M}|\Phi(H_{\varepsilon})|_{H_{\varepsilon}}=0.
\end{equation}
We will follow Simpson's argument (\cite{Sim88}) to show that if (\ref{t3eq2}) is not valid, there exists a coherent $\alpha$-twisted subsheaf contradicting the $\omega$-semi-stability of $E$.

We will prove (\ref{t3eq3}) by contradiction. If (\ref{t3eq3}) does not hold, then there exist $\delta>0$ and a subsequence $\varepsilon_i\to 0$, as $i\to +\infty$, such that
\begin{equation*}
    ||\log h_{\varepsilon_i}||_{L^2}\to +\infty
\end{equation*}
and
\begin{equation}\label{t3eq4}
    \max_{M}|\Phi(H_{\varepsilon_i})|_{H_{\varepsilon_i}}=\varepsilon_i\max_{M}|\log h_{\varepsilon_i}|_{H_{\varepsilon_i}}\geq \delta.
\end{equation}

Set
\begin{equation*}
    s_{\varepsilon_{i}}=\log h_{\varepsilon_{i}}, l_i=||s_{\varepsilon_{i}}||_{L^2}, u_{\varepsilon_{i}}=\frac{s_{\varepsilon_i}}{l_i}.
\end{equation*}
Then we have $||u_{\varepsilon_i}||_{L^2}=1$. From (\ref{t3eq2}), it follows that ${\rm tr}(u_{\varepsilon_i})=0$. Then combining (\ref{t3eq4}) with Lemma \ref{lem3:1}, we have
\begin{equation}\label{t3eq5}
    l_i\geq \frac{\delta}{C\varepsilon_i}-\max_{M}|\Phi(K)|_{K}
\end{equation}
and
\begin{equation}\label{t3eq6}
    \max_{M}|u_{\varepsilon_{i}}|<\frac{C}{l_i}\bigg(l_i+\max_{M}|\Phi(K)|_{K}\bigg)<C_2<+\infty.
\end{equation}

In the following, we divide our proof into two steps.

{\bf Step 1} We will show that $u_{\varepsilon_i}$ converge to $u_{\infty}$ weakly as $i\to +\infty$. We need to show that $||u_{\varepsilon_i}||_{L_1^2}$ are uniformly bounded. Since $||u_{\varepsilon_{i}}||_{L^2}=1$, it enough  to prove $||\bar{\partial} u_{\varepsilon_i}||_{L^2}$ are uniformly bounded.

By Lemma \ref{keyid}, for each $h_{\varepsilon_i}$, it holds
\begin{equation}\label{t3eq7}
    \int_{M}{\rm tr}(\Phi(K)u_{\varepsilon_i})\frac{\omega^n}{n!}+l_i\int_{M}\langle\Psi(l_iu_{\varepsilon_i})(\bar{\partial}u_{\varepsilon_i},\bar{\partial}u_{\varepsilon_i})\rangle_{K}\frac{\omega^n}{n!}=-\varepsilon_il_i.
\end{equation}
Combining (\ref{t3eq5}) and (\ref{t3eq7}), we obtain
\begin{equation}\label{t3eq8}
    \frac{\delta}{C}+\int_{M}{\rm tr}(\Phi(K)u_{\varepsilon_i})\frac{\omega^n}{n!}+l_i\int_{M}\langle\Psi(l_iu_{\varepsilon_i})(\bar{\partial}u_{\varepsilon_i},\bar{\partial}u_{\varepsilon_i})\rangle_{K}\frac{\omega^n}{n!}\leq \varepsilon_i\max_{M}|\Phi(K)|_{K}.
\end{equation}
Next, we consider the function
\begin{equation}\label{t3eq9}
    l\Psi(lx,ly)=\begin{cases}
    l,&x=y;\\
    \frac{e^{l(y-x)}-1}{y-x},&x\neq y.
    \end{cases}
\end{equation}
Because of (\ref{t3eq6}), we may assume that $(x,y)\in [-C_2,C_2]\times [-C_2,C_2]$.
It is easy to check that
\begin{equation}\label{t3eq10}
    l\Psi(lx,ly)\to \begin{cases}
    (x-y)^{-1},&x>y;\\
    +\infty,&x\leq y,
    \end{cases}
\end{equation}
increases monotonically as $l\to +\infty$. Let $\zeta\in C^{\infty}(\mathbb{R}\times \mathbb{R},\mathbb{R}^+)$ satisfying $\zeta(x,y)<(x-y)^{-1}$ whenever $x>y$. From (\ref{t3eq8}), (\ref{t3eq10}) and the same arguments in \cite[Lemma 5.4]{Sim88}, for $i$ large enough, we have
\begin{equation}\label{t3eq11}
    \frac{\delta}{C}+\int_{M}{\rm tr}(\Phi(K)u_{\varepsilon_i})\frac{\omega^n}{n!}+\int_{M}\langle\zeta(u_{\varepsilon_i})(\bar{\partial}u_{\varepsilon_i},\bar{\partial}u_{\varepsilon_i})\rangle_{K}\frac{\omega^n}{n!}\leq \varepsilon_i\max_{M}|\Phi(K)|_{K}.
\end{equation}
In particular, we take $\zeta=\frac{1}{3C_2}$. It can be easy check that $\frac{1}{3C_2}<\frac{1}{x-y}$ when $(x,y)\in [-C_2,C_2]\times [-C_2,C_2]$ and $x>y$. This implies that
\begin{equation*}
        \frac{\delta}{C}+\int_{M}{\rm tr}(\Phi(K)u_{\varepsilon_i})\frac{\omega^n}{n!}+\int_{M}\frac{1}{3C_2}|\bar{\partial}u_{\varepsilon_{i}}|_{K}^2\frac{\omega^n}{n!}\leq \varepsilon_i\max_{M}|\Phi(K)|_{K},
\end{equation*}
for $i>>0$. Then we obtain
\begin{equation*}
    \int_{M}|\bar{\partial}u_{\varepsilon_{i}}|_{K}^2\leq 3C_2^2\max_{M}|\Phi(K)|_{K}{\rm Vol}(M).
\end{equation*}
Therefore, $u_{\varepsilon_{i}}$ are bounded in $L_1^2$. Then we can choose a subsequence $\{u_{\varepsilon_{i_{j}}}\}$ (still denoted by $u_{\varepsilon_i}$ for simplicity) such that $u_{\varepsilon_i}\rightharpoonup u_{\infty}$ weakly in $L_1^2$. Since $L_1^2\hookrightarrow L^2$. it follows that
\begin{equation*}
    1=\int_{M}|u_{\varepsilon_i}|_{H_0}^2\to \int_{M}|u_{\infty}|_K^2.
\end{equation*}
This indicates that $||u_{\infty}||_{L^2}=1$ and $u_{\infty}$ is non-trivial. Then using (\ref{t3eq11}) and following a similar argument as in  \cite[Lemma 5.4]{Sim88}, we have
\begin{equation}\label{t3eq12}
    \frac{\delta}{C}+\int_{M}{\rm tr}(\Phi(K)u_{\infty})\frac{\omega^n}{n!}+\int_{M}\langle\zeta(u_{\infty})(\bar{\partial}u_{\infty},\bar{\partial}u_{\infty})\rangle_{K}\frac{\omega^n}{n!}\leq 0.
\end{equation}

{\bf Step 2} We will use Uhlenbeck and Yau's trick from \cite{UY86} to construct an $\alpha$-twisted coherent subsheaf which contradicts the $\omega$-semi-stability of $E$. Before going on, we need the following lemma.
\begin{lem}[Lemma 5.12 in \cite{Perego21}]\label{lem3:4}
Let $E=\{E_{i},\varphi_{ij}\}$ be an $\alpha$-twisted holomorphic vector bundle on $M$ whose associated locally free $\alpha$-twisted sheaf is $\mathscr{E}$. If $\pi\in L_{1}^2(\mbox{End}(E))$ is a weakly holomorphic $\alpha$-twisted subbundle of $E$, there is a coherent $\alpha$-twisted subsheaf $\mathscr{F}$ of $\mathscr{E}$ and an analytic subset $S$ of $M$ such that:
\begin{enumerate}
    \item [(1)] $S$ has codimension at least $2$ in $M$,
    \item [(2)] $\pi_{|M\setminus S}\in A^0(E_{|_{M\setminus S}})$ and have $\pi_{|{M\setminus S}}^*=\pi_{|M\setminus S}=\pi_{|M\setminus S}^2$ and $({\rm Id}_{E|M\setminus S}-\pi_{|M\setminus S})\circ \bar{\partial}(\pi_{|_{M\setminus S}})$,
    \item [(3)] $\mathscr{F}_{|_{M\setminus S}}$ is the image of $\pi_{|M\setminus S}$ and an $\alpha$-twisted holomorphic subbundle of $E$.
\end{enumerate}
\end{lem}

From (\ref{t3eq12}) and the technique in \cite[Lemma 5.5]{Sim88}, we conclude that the eigenvalues of $u_{\infty}$ are constant almost everywhere. Let $\lambda_1<\lambda_2<\cdots <\lambda_l$ be the distinct eigenvalues of $u_{\infty}$. Since ${\rm tr}(u_{\infty})={\rm tr}(u_{\varepsilon_i})=0$ and $||u_{\infty}||_{L^{2}}=1$, we conclude that $2\leq l\leq r$ be the distinct eigenvalues of $u_{\infty}$. Then for each eigenvalue $\lambda_j$ ($1\leq l-1$), which is a global function on $M$, we can construct a function
\begin{equation*}
    P_j:\mathbb{R}\to \mathbb{R}
\end{equation*}
such that
\begin{equation*}
    P_j=\begin{cases}
    1,&x\leq \lambda_j,\\
    0,&x\geq \lambda_j.
    \end{cases}
\end{equation*}
We denote $\pi_j=\{\pi_{i,j}\}_{i\in I}= P_j(u_{\infty})$. Following the same arguments as in \cite{Sim88}, we have
\begin{enumerate}
    \item [(i)] $\pi_j\in L_1^2(\mbox{End}(E))$;
    \item [(ii)] $\pi_j^2=\pi_j=\pi_j^{*K}$;
    \item [(iii)] $(Id_{E}-\pi_j)\circ \bar{\partial}\pi_j=0$.
\end{enumerate}
By Lemma \ref{lem3:4}, we know that the weakly holomorphic $\alpha$-twisted vector bundles $\{\pi_j\}_{j=1}^{l-1}$ determine $l-1$ coherent $\alpha$-twisted subsheaves of $E$. Denote $E_j=\{E_{i,j}\}_{i\in I}=\pi_j(E)$. Since ${\rm tr}(u_{\infty})=0$ and $u_{\infty}=\lambda_l\cdot {\rm Id}_E-\sum_{j=1}^{l-1}(\lambda_{j+1}-\lambda_j)\pi_j$, it holds that
\begin{equation}\label{t3eq13}
    \lambda_l{\rm rank}(E)=\sum_{j=1}^{l-1}(\lambda_{j+1}-\lambda_j){\rm rank}(E_j).
\end{equation}
Set
\begin{equation}\label{t3eq14}
    \nu=\lambda_l\deg(E)-\sum_{j=1}^{l-1}(\lambda_{j+1}-\lambda_j)\deg(E_j).
\end{equation}
On the one hand, substituting (\ref{t3eq13}) into (\ref{t3eq14}) directly, we have
\begin{equation}\label{t3eq15}
\nu=\sum_{j=1}^{l-1}(\lambda_{j+1}-\lambda_j){\rm rank}(E_j)\bigg(\frac{\deg(E)}{{\rm rank}(E)}-\frac{\deg(E_j)}{{\rm rank}(E_j)}\bigg).
\end{equation}
On the other hand, from the twisted Gauss--Codazzi equation (\cite{Perego21}), we have
\begin{equation}\label{t3eq16}
    \deg(E_j)=\int_{M}{\rm tr}(\pi_j\sqrt{-1}\Lambda_{\omega}F_K)-|\bar{\partial}\pi_j|^2_K\frac{\omega^n}{n!}
\end{equation}
Then substituting (\ref{t3eq16}) into (\ref{t3eq14}), we have
\begin{equation*}
    \begin{split}
        2\pi\nu&=\lambda_{l}\int_{M}{\rm tr}(\sqrt{-1}\Lambda_{\omega}F_K)\frac{\omega^n}{n!}\\
        & \quad -\sum_{j=1}^{l-1}(\lambda_{j+1}-\lambda_j)\int_{M}{\rm tr}(\pi_j\sqrt{-1}\Lambda_{\omega}F_K)-|\bar{\partial} \pi_j|_K^2\frac{\omega^n}{n!}\\
        &=\int_{M}{\rm tr}\bigg(\Big(\lambda_l\cdot{\rm Id}_E-\sum_{j=1}^{l-1}(\lambda_{j+1}-\lambda_j)\pi_{j}\Big)\sqrt{-1}\Lambda_{\omega}F_K\bigg)\frac{\omega^n}{n!}\\
        &\quad +\sum_{j=1}^{l-1}(\lambda_{j+1}-\lambda_j)\int_{M}|\bar{\partial}\pi_j|_K^2\frac{\omega^n}{n!}\\
        &=\int_{M}{\rm tr}(u_{\infty}\sqrt{-1}\Lambda_{\omega}F_K)\frac{\omega^n}{n!}+\int_{M}\bigg\langle \sum_{j=1}^{l-1}(\lambda_{j+1}-\lambda_j)(dP_{\alpha})^2(u_{\infty})(\bar{\partial}u_{\infty}),\bar{\partial}u_{\infty}\bigg\rangle_{K}\frac{\omega^n}{n!},
    \end{split}
\end{equation*}
where the functions $dP_{j}:\mathbb{R}\times \mathbb{R}\to \mathbb{R}$ are defined by
\begin{equation*}
    dP_{j}(x,y)=\begin{cases}
    \frac{P_j(x)-P_{j}(y)}{x-y}, &x\neq y;\\
    P'_j(x), &x=y.
    \end{cases}
\end{equation*}
By simple calculation, if $\lambda_a\neq \lambda_b$, we have
\begin{equation*}
    \sum_{j=1}^{l-1}(\lambda_{j+1}-\lambda_{j})(dP_j)^2(\lambda_{a},\lambda_{b})=|\lambda_a-\lambda_b|^{-1}.
\end{equation*}
Since ${\rm tr}(u_{\infty})=0$, according to (\ref{t3eq12}) and the same arguments in \cite{LZ15}, it follows that
\begin{equation}\label{t3eq17}
    \begin{split}
        2\pi v&=\int_{M}{\rm tr}(u_{\infty}\sqrt{-1}\Lambda_{\omega}F_K)\frac{\omega^n}{n!}+\int_{M}\bigg\langle \sum_{j=1}^{l-1}(\lambda_{j+1}-\lambda_j)(dP_{\alpha})^2(u_{\infty})(\bar{\partial}u_{\infty}),\bar{\partial}u_{\infty}\bigg\rangle_{K}\frac{\omega^n}{n!}\\
        &<-\frac{\delta}{C}.
    \end{split}
\end{equation}
Combining (\ref{t3eq15}) with (\ref{t3eq17}), we obtain
\begin{equation*}
    \sum_{j=1}^{l-1}(\lambda_{j+1}-\lambda_j){\rm rank}(E_j)\bigg(\frac{\deg(E)}{{\rm rank}(E)}-\frac{\deg(E_j)}{{\rm rank}(E_j)}\bigg)<0.
\end{equation*}
This indicates that there must exist a term $\mu(E)-\mu(E_{j_0})<0$, which contradicts the $\omega$-semi-stability of $E$.
\end{proof}

Finally, we prove the other direction of the theorem \ref{maintheorem}. In fact, we get the following theorem. The proof of the following theorem is standard and we present the proof for the readers' convenience.
\begin{thm}\label{thm3:5}
Let $(M,\omega)$ be  a compact Gauduchon manifold and $E=\{E_i,\varphi_{ij}\}_{i,j\in I}$ be an $\alpha$-twisted holomorphic vector bundle of rank $r$ over $M$. If $E$ admits an approximate Hermitian--Einstein structure, then $E$ is $\omega$-semi-stable.
\end{thm}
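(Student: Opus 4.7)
I will argue by contradiction along the classical Kobayashi--L\"ubke line, adapted to the twisted Gauduchon setting. Suppose $E$ is not $\omega$-semi-stable, so there is a proper coherent $\alpha$-twisted subsheaf $\mathscr{F}\subset E$ with $\mu_\omega(\mathscr{F})>\mu_\omega(E)$. After replacing $\mathscr{F}$ by its saturation---whose slope can only increase---I may assume $\mathscr{F}$ is saturated. The standard converse to Lemma \ref{lem3:4} then attaches to $\mathscr{F}$ a weakly holomorphic $L_1^2$ orthogonal projection $\pi\in \text{Herm}(E,H_\varepsilon)$ satisfying $\pi^{*H_\varepsilon}=\pi=\pi^2$ and $(\text{Id}_E-\pi)\,\bar{\partial}\pi=0$, with $\pi$ smooth off an analytic subset of codimension $\geq 2$.

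The central tool is the twisted Gauss--Codazzi formula (\ref{t3eq16}) already used in the preceding proof: for any Hermitian metric $H$ on $E$,
\[
    2\pi\,\deg_\omega(\mathscr{F}) \;=\; \int_M \text{tr}\bigl(\pi\,\sqrt{-1}\Lambda_\omega F_H\bigr)\,\frac{\omega^n}{n!} \;-\; \int_M |\bar{\partial}\pi|_H^2\,\frac{\omega^n}{n!}.
\]
Applying this with $H=H_\varepsilon$ and decomposing $\sqrt{-1}\Lambda_\omega F_{H_\varepsilon}=\Phi(H_\varepsilon)+\lambda\cdot \text{Id}_E$, where $\lambda\,\text{Vol}(M,\omega)=2\pi\,\mu_\omega(E)$, together with the pointwise identity $|\pi|_{H_\varepsilon}^2=\text{tr}(\pi)=\text{rank}(\mathscr{F})$, the Cauchy--Schwarz bound $|\text{tr}(\pi\,\Phi(H_\varepsilon))|\leq |\pi|_{H_\varepsilon}\,|\Phi(H_\varepsilon)|_{H_\varepsilon}$, and the hypothesis $|\Phi(H_\varepsilon)|_{H_\varepsilon}<\varepsilon$, I drop the non-negative $|\bar{\partial}\pi|^2$-term to obtain
\[
    2\pi\,\deg_\omega(\mathscr{F}) \;\leq\; \varepsilon\sqrt{\text{rank}(\mathscr{F})}\,\text{Vol}(M,\omega)\;+\;2\pi\,\mu_\omega(E)\,\text{rank}(\mathscr{F}).
\]
Dividing by $2\pi\,\text{rank}(\mathscr{F})$ and sending $\varepsilon\to 0$ yields $\mu_\omega(\mathscr{F})\leq \mu_\omega(E)$, contradicting the choice of $\mathscr{F}$.

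The only subtlety is the passage from the coherent subsheaf $\mathscr{F}$ to the $L_1^2$ projection $\pi$ and the validity of the Gauss--Codazzi identity across the codimension-$\geq 2$ singular locus of $\pi$; both are supplied by Perego's framework, and the $\alpha$-twist itself creates no additional difficulty because the $B$-field contributes to $F_H|_{U_i}$ only as a scalar term $-B_i\cdot \text{Id}_{E_i}$, which is absorbed cleanly into the global degree formula and pairs trivially under the trace with $\pi$. I therefore expect the entire proof to reduce to this short trace-and-estimate computation once the correct twisted Gauss--Codazzi identity has been imported from \cite{Perego21}.
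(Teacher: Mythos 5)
Your argument is correct, but it takes a genuinely different route from the paper. The paper follows Kobayashi's vanishing-theorem strategy: given a rank-$p$ subsheaf $\mathscr{F}$ with determinant line bundle $L$, it forms the \emph{untwisted} bundle $G=\wedge^p E\otimes L^{-1}$, observes that $G$ inherits an approximate Hermitian--Einstein structure and carries a nontrivial holomorphic section (the inclusion $\mathscr{F}\hookrightarrow E$), and then invokes Proposition \ref{prop3:6} to conclude $\deg_\omega(G)\geq 0$, which unwinds to $\mu_\omega(\mathscr{F})\leq\mu_\omega(E)$. You instead run the direct Chern--Weil/second-fundamental-form estimate: pass from $\mathscr{F}$ to its $L_1^2$ weakly holomorphic projection $\pi$, apply the twisted Gauss--Codazzi degree formula, discard the nonpositive $-|\bar\partial\pi|^2$ term, and control $\mathrm{tr}(\pi\,\Phi(H_\varepsilon))$ by Cauchy--Schwarz before letting $\varepsilon\to 0$. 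Both are standard and both lean on Perego's framework at exactly one nontrivial point: the paper needs the vanishing theorem for morphisms between approximate Hermitian--Einstein twisted bundles of ordered slopes (and the fact that $\wedge^pE\otimes L^{-1}$ inherits such a structure), while you need the converse of Lemma \ref{lem3:4} (saturated subsheaf $\Rightarrow$ $L_1^2$ projection with $(\mathrm{Id}_E-\pi)\bar\partial\pi=0$ and $\int_M|\bar\partial\pi|^2<\infty$) together with the validity of the degree formula (\ref{t3eq16}) across the codimension-$\geq 2$ singular locus. Since the paper already imports (\ref{t3eq16}) and the projection machinery for the forward direction of Theorem \ref{maintheorem}, your route is arguably the more economical of the two in context; the paper's route has the advantage of never touching the singular set of $\pi$, at the cost of the auxiliary bundle $G$ and Proposition \ref{prop3:6}. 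Your computation itself is sound: the identity $|\pi|^2_{H_\varepsilon}=\mathrm{tr}(\pi)=\mathrm{rank}(\mathscr{F})$, the normalization $\lambda\,\mathrm{Vol}(M,\omega)=2\pi\mu_\omega(E)$, and the reduction to the saturation are all as they should be.
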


Before we giving the detailed proof, we need the following vanishing theorem.

\begin{prop}[Proposition 3.31 in \cite{Perego21}]\label{prop3:6}
Let $E_1$ and $E_2$ be  two $\alpha$-twisted holomorphic vector bundles  on $M$ of respective ranks $r_1$ and $r_2$, and suppose that
\begin{equation*}
    \frac{\deg_{\omega}(E_1)}{r_1}>\frac{\deg_{\omega}(E_2)}{r_2}.
\end{equation*}
If $E_1$ and $E_2$  admit approximate Hermitian--Einstein structure, then every morphism $f\in \text{Hom}(E_1,E_2)$ is zero.
\end{prop}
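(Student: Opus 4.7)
The key enabling observation is that although $E_1$ and $E_2$ are $\alpha$-twisted, the bundle $\mathrm{Hom}(E_1,E_2) = E_1^{\vee}\otimes E_2$ carries the trivial twist $\alpha^{-1}\cdot\alpha = 1$, so it is a genuine (untwisted) holomorphic vector bundle on $M$, and a morphism $f\in\mathrm{Hom}(E_1,E_2)$ is an ordinary global holomorphic section of it. The plan is to run Kobayashi's classical vanishing argument on this bundle, with approximate Hermitian--Einstein metrics on $E_1,E_2$ replacing honest HE metrics, and with the Gauduchon integration by parts already used in Section~\ref{sec:Pro} replacing the K\"ahler version.

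Given $\varepsilon>0$, choose approximate HE metrics $H^{(k)}_\varepsilon$ on $E_k$ ($k=1,2$) with Einstein constants $\lambda_k$. Taking the trace of the HE equation and integrating against $\omega^n/n!$ identifies $\lambda_k$ as a positive universal multiple of $\mu_\omega(E_k)$, so the slope hypothesis translates into $\lambda_1 > \lambda_2$. Equip $\mathrm{Hom}(E_1,E_2)$ with the induced Hermitian metric $H_\varepsilon := (H^{(1)}_\varepsilon)^{\vee}\otimes H^{(2)}_\varepsilon$. Because $E_1$ and $E_2$ carry the same twist, the local $-B_i\cdot\mathrm{Id}$ contributions to $F_{H^{(1)}_\varepsilon}$ and $F_{H^{(2)}_\varepsilon}$ cancel in the induced curvature, producing the globally defined endomorphism
\begin{equation*}
\sqrt{-1}\Lambda_\omega F_{H_\varepsilon}(f) \;=\; \bigl(\sqrt{-1}\Lambda_\omega F_{H^{(2)}_\varepsilon}\bigr)\circ f \;-\; f\circ\bigl(\sqrt{-1}\Lambda_\omega F_{H^{(1)}_\varepsilon}\bigr).
\end{equation*}
Combined with the approximate HE bounds $\bigl|\sqrt{-1}\Lambda_\omega F_{H^{(k)}_\varepsilon} - \lambda_k\cdot\mathrm{Id}\bigr|_{H^{(k)}_\varepsilon} < \varepsilon$, this gives the pointwise estimate
\begin{equation*}
\bigl\langle \sqrt{-1}\Lambda_\omega F_{H_\varepsilon}(f),\, f\bigr\rangle_{H_\varepsilon} \;\leq\; (\lambda_2 - \lambda_1 + 2\varepsilon)\,|f|^2_{H_\varepsilon}.
\end{equation*}

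Next, I would invoke the standard Kodaira--Bochner identity for a holomorphic section of a Hermitian holomorphic vector bundle (valid on any Hermitian manifold, proved by direct local computation),
\begin{equation*}
\sqrt{-1}\Lambda_\omega\, \bar{\partial}\partial\, |f|^2_{H_\varepsilon} \;=\; \bigl\langle \sqrt{-1}\Lambda_\omega F_{H_\varepsilon}(f), f\bigr\rangle_{H_\varepsilon} \;-\; |D'_{H_\varepsilon} f|^2_{H_\varepsilon},
\end{equation*}
and integrate against $\omega^n/n!$. Because $\omega$ is Gauduchon, i.e.\ $\partial\bar{\partial}\omega^{n-1}=0$, Stokes' theorem gives $\int_M \sqrt{-1}\Lambda_\omega\bar{\partial}\partial u\,\omega^n/n! = 0$ for every smooth function $u$, so the left-hand side of the integrated identity vanishes. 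Combining with the pointwise curvature estimate yields
\begin{equation*}
0 \;\leq\; \int_M |D'_{H_\varepsilon} f|^2_{H_\varepsilon}\,\frac{\omega^n}{n!} \;\leq\; (\lambda_2 - \lambda_1 + 2\varepsilon)\int_M |f|^2_{H_\varepsilon}\,\frac{\omega^n}{n!}.
\end{equation*}
Since $\lambda_2 - \lambda_1 < 0$, choosing $\varepsilon$ so small that $\lambda_2 - \lambda_1 + 2\varepsilon < 0$ forces the right-hand side to be non-positive while the left-hand side is non-negative, hence $\int_M |f|^2_{H_\varepsilon}\omega^n/n! = 0$ and $f\equiv 0$.

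The only nontrivial structural point to verify carefully is the cancellation of the $B$-field in the induced curvature on $\mathrm{Hom}(E_1,E_2)$, which is precisely what reduces the twisted problem to an untwisted one; everything else is a direct translation of the classical Kobayashi vanishing argument into the Gauduchon framework developed earlier in the paper, so I do not anticipate any serious obstacle.
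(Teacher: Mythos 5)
Your argument is correct, but note that the paper itself offers no proof of this statement: it is imported verbatim as Proposition 3.31 of \cite{Perego21}, so the only in-paper ``proof'' is a citation, and the natural comparison is with Perego's argument, which runs along essentially the same lines as yours (pass to the untwisted bundle $\mathrm{Hom}(E_1,E_2)$ and apply the vanishing theorem for holomorphic sections of a Hermitian bundle with negative mean curvature). All the key points in your write-up check out: the twist of $E_1^{\vee}\otimes E_2$ is indeed $\alpha^{-1}\cdot\alpha=1$, so a morphism is an honest global holomorphic section; the local $-B_i\cdot\mathrm{Id}$ contributions cancel in $F_{H_\varepsilon}(f)=F_{H^{(2)}_\varepsilon}\circ f-f\circ F_{H^{(1)}_\varepsilon}$; integrating the trace of the approximate Hermitian--Einstein inequality forces $\lambda_k=2\pi\deg_\omega(E_k)/(r_k\,\mathrm{Vol}(M,\omega))$ and hence $\lambda_1>\lambda_2$; and the Gauduchon condition $\partial\bar{\partial}\omega^{n-1}=0$ kills $\int_M\sqrt{-1}\Lambda_\omega\bar{\partial}\partial|f|^2_{H_\varepsilon}\,\omega^n/n!$ after two applications of Stokes. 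Two minor points are worth making explicit. First, the overall sign of your Weitzenb\"ock identity depends on the convention for $\Lambda_\omega$ (Kobayashi's version of the right-hand side is $|D'f|^2-\langle\sqrt{-1}\Lambda_\omega F(f),f\rangle$); what your argument actually uses is only the \emph{relative} sign of the two terms, since after integration either convention yields $\int_M|D'_{H_\varepsilon}f|^2_{H_\varepsilon}\,\omega^n/n!=\int_M\langle\sqrt{-1}\Lambda_\omega F_{H_\varepsilon}(f),f\rangle_{H_\varepsilon}\,\omega^n/n!$, and that identity is all you need. Second, the pointwise bound $|\langle(\sqrt{-1}\Lambda_\omega F_{H^{(k)}_\varepsilon}-\lambda_k\cdot\mathrm{Id})\circ f,f\rangle_{H_\varepsilon}|\le\varepsilon|f|^2_{H_\varepsilon}$ tacitly uses that the operator norm is controlled by the norm in which the approximate Hermitian--Einstein estimate is stated; this is harmless (operator norm is dominated by the Hilbert--Schmidt norm) but deserves a word. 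With those clarifications the proof is complete.
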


\begin{proof}[Proof of theorem \ref{thm3:5}]
Let $\mathscr{E}$  be the locally free $\alpha$-twisted coherent sheaf  associated to $E$ and $\mathscr{F}$ be any $\alpha$-twisted coherent subsheaf of $\mathscr{E}$ with rank $0<p<r$. Let $L$ be the $\alpha^p$-twisted holomorphic  line bundle corresponding to $\det(\mathscr{F})$. We consider the following untwisted holomorphic vector bundle
\begin{equation*}
   G= \wedge^pE\otimes L^{-1},
\end{equation*}
where $L^{-1}$ is the dual $\alpha^{-p}$-twisted line bundle of $L$. Following the same arguments as in \cite{Perego21}, we know that $G$ admits an approximate Hermitian--Einstein structure. Since there is a nontrivial holomorphic section of $G$, then by Proposition \ref{prop3:6}, we have
\begin{equation*}
    \deg_{\omega}(G)\geq 0.
\end{equation*}
Then
\begin{equation*}
\begin{split}
    0\leq \deg_{\omega}(G)&=\deg_{\omega}(\wedge^p E)-{\rm rank}(\wedge^pE)\deg_{\omega}(L)\\
    &=\deg_{\omega}(\wedge^p E)-{\rm rank}(\wedge^pE)\deg_{\omega}(\mathscr{F}).
    \end{split}
\end{equation*}
So we have
\begin{equation*}
    0\leq \mu_{\omega}(G)=\mu_{\omega}(\wedge^pE)-p\mu_{\omega}(\mathscr{F})=p(\mu_{\omega}(E)-\mu_{\omega}(\mathscr{F})).
\end{equation*}
This implies that $\mu_{\omega}(\mathscr{F})\leq \mu_{\omega}(E)$, i.e., $E$ is $\omega$-semi-stable.
\end{proof}

\section{Bogomolov type inequality for semi-stable $\alpha$-twisted holomorphic vector bundles}\label{sec:Bo}

In this section, we will prove the Bogomolov type inequality for semi-stable $\alpha$-twisted holomorphic vector bundles over compact Gauduchon Astheno-K\"ahler  manifolds. Before giving the detailed proof of Theorem \ref{thm2}, we need the following lemma.
\begin{lem}\label{lem4:1}
Let $(M,\tilde{\omega})$ be a compact Hermitian manifold of dimension $n$, and $\omega$ a Gauduchon metric which is conformal to $\tilde{\omega}$. Let $E=\{E_{i},\varphi_{ij}\}_{i,j\in I}$ be an $\alpha$-twisted holomorphic vector bundle of rank $r$ over $M$. If $E$ is $\omega$-semi-stable,  then for any $\varepsilon>0$, there exists a Hermitian metric $H_{\varepsilon}=\{H_{i,\varepsilon}\}_{i\in I}$ on $E$ such that 
\begin{equation}\label{sec:4eq1}
    \sup_{M}|\sqrt{-1}\Lambda_{\omega}F_{H_{\varepsilon}}^{\perp}|_{H_{\varepsilon}}<\varepsilon,
\end{equation}
where $F_{H_{\varepsilon}}^{\perp}=F_{H_{\varepsilon}}-\frac{1}{r}{\rm tr}F_{H_{\varepsilon}}\otimes {\rm Id}_{E}$ is the trace free part of $F_{H_{\varepsilon}}$.
\end{lem}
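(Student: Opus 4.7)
The plan is to apply Theorem \ref{maintheorem} and then observe that the trace-free part of the mean curvature is automatically bounded by the full deviation of the mean curvature from $\lambda\cdot {\rm Id}_E$.

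First I would invoke Theorem \ref{maintheorem} directly. Since $\omega$ is Gauduchon and $E$ is $\omega$-semi-stable, the theorem guarantees that $E$ admits an approximate Hermitian--Einstein structure. Unpacking the definition, this furnishes for every $\varepsilon > 0$ a Hermitian metric $H_\varepsilon = \{H_{i,\varepsilon}\}_{i \in I}$ on $E$ such that
\begin{equation*}
\sup_M \bigl|\sqrt{-1}\Lambda_\omega F_{H_\varepsilon} - \lambda \cdot {\rm Id}_E\bigr|_{H_\varepsilon} < \varepsilon,
\end{equation*}
where $\lambda$ is the Einstein constant determined by $\deg_\omega(E)$, $r$ and the volume of $(M,\omega)$.

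Second, I would relate the trace-free part $\sqrt{-1}\Lambda_\omega F_{H_\varepsilon}^\perp$ to this approximate Einstein condition. Setting $A_\varepsilon := \sqrt{-1}\Lambda_\omega F_{H_\varepsilon}$ and using that $\Lambda_\omega$ commutes with the fibrewise trace, a direct calculation gives
\begin{equation*}
\sqrt{-1}\Lambda_\omega F_{H_\varepsilon}^\perp
= A_\varepsilon - \tfrac{1}{r}{\rm tr}(A_\varepsilon) \cdot {\rm Id}_E
= (A_\varepsilon - \lambda \cdot {\rm Id}_E) - \tfrac{1}{r}{\rm tr}(A_\varepsilon - \lambda \cdot {\rm Id}_E) \cdot {\rm Id}_E,
\end{equation*}
since the scalar multiple $\lambda\cdot {\rm Id}_E$ is annihilated by the trace-free projection. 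In other words, the trace-free part of $\sqrt{-1}\Lambda_\omega F_{H_\varepsilon}$ coincides with the trace-free part of $\sqrt{-1}\Lambda_\omega F_{H_\varepsilon} - \lambda\cdot {\rm Id}_E$.

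Third, for any $H_\varepsilon$-self-adjoint endomorphism $B$ of $E$ one has the pointwise orthogonal decomposition $B = B^\perp + \tfrac{1}{r}{\rm tr}(B)\cdot {\rm Id}_E$ in the Hilbert--Schmidt inner product induced by $H_\varepsilon$, whence
\begin{equation*}
|B^\perp|_{H_\varepsilon}^2 + \tfrac{|{\rm tr}(B)|^2}{r} = |B|_{H_\varepsilon}^2,
\end{equation*}
so that $|B^\perp|_{H_\varepsilon} \le |B|_{H_\varepsilon}$ pointwise. Applying this with $B = \sqrt{-1}\Lambda_\omega F_{H_\varepsilon} - \lambda\cdot {\rm Id}_E$ and combining with the first step yields
\begin{equation*}
\sup_M |\sqrt{-1}\Lambda_\omega F_{H_\varepsilon}^\perp|_{H_\varepsilon} \le \sup_M \bigl|\sqrt{-1}\Lambda_\omega F_{H_\varepsilon} - \lambda \cdot {\rm Id}_E\bigr|_{H_\varepsilon} < \varepsilon,
\end{equation*}
which is precisely (\ref{sec:4eq1}). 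I do not anticipate any genuine obstacle here: the lemma is essentially a repackaging of Theorem \ref{maintheorem}, and the only ingredient beyond that result is the elementary linear-algebraic observation that passing to the trace-free part can only decrease the pointwise norm.
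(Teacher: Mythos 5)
Your argument is correct for the display \eqref{sec:4eq1} exactly as printed, and your third step (the pointwise orthogonal decomposition giving $|B^{\perp}|_{H_\varepsilon}\le |B|_{H_\varepsilon}$) is precisely the estimate the paper uses, phrased there as the eigenvalue inequality $\sum_j(\lambda_j-\bar\lambda)^2\le\sum_j(\lambda_j-C)^2$ applied to the real eigenvalues of the self-adjoint endomorphism $\sqrt{-1}\Lambda_\omega F_{H_\varepsilon}$. However, you prove the bound only for the contraction $\Lambda_{\omega}$, and in doing so you never use the hypothesis that $\omega$ is conformal to the auxiliary Hermitian metric $\tilde\omega$ --- that unused hypothesis should have been a warning sign. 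The display \eqref{sec:4eq1} contains a typo: the contraction intended, the one actually established in the paper's proof, and the one consumed later at \eqref{sec4:eq7} in the proof of Theorem \ref{thm2}, is $\Lambda_{\tilde\omega}$, not $\Lambda_{\omega}$. The substance of the lemma is the transfer of the approximate Hermitian--Einstein estimate, which is formulated with respect to the Gauduchon metric $\omega$, into an estimate with respect to the (Astheno-K\"ahler) metric $\tilde\omega$ against which the Chern forms are integrated in the Bogomolov inequality; with $\Lambda_\omega$ in place of $\Lambda_{\tilde\omega}$ the lemma is, as you observe, a near-trivial repackaging of Theorem \ref{maintheorem}.

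The missing step is short but is the actual content. Writing $\omega=e^{\phi}\tilde\omega$ one has $\Lambda_{\tilde\omega}=e^{\phi}\Lambda_{\omega}$ on $(1,1)$-forms, hence, combining with your trace-free projection estimate,
\begin{equation*}
\sup_M\bigl|\sqrt{-1}\Lambda_{\tilde\omega}F_{H}^{\perp}\bigr|_{H}
=\sup_M e^{\phi}\bigl|\sqrt{-1}\Lambda_{\omega}F_{H}^{\perp}\bigr|_{H}
\le e^{\sup_M\phi}\,\sup_M\bigl|\sqrt{-1}\Lambda_{\omega}F_{H}-\lambda\cdot {\rm Id}_E\bigr|_{H},
\end{equation*}
and one then invokes Theorem \ref{maintheorem} with tolerance $\varepsilon e^{-\sup_M\phi}$ rather than $\varepsilon$. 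With that one additional line your argument coincides with the paper's proof.
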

\begin{proof}
Since $\omega$ is conformal to $\tilde{\omega}$, then there exists a smooth function $\phi$ such that
\begin{equation*}
    \omega=e^{\phi}\tilde{\omega}.
\end{equation*}
Then by direct calculation, for any Hermitian metric $H$, we have
\begin{equation}\label{sec:4eq2}
\begin{split}
    \sup_{M}|\sqrt{-1}\Lambda_{\tilde{\omega}}F_H^{\perp}|_{H}&=\sup_{M}\bigg|\sqrt{-1}\Lambda_{\tilde{\omega}}F_H-\bigg(\frac{1}{r}\sqrt{-1}\Lambda_{\tilde{\omega}}{\rm tr}F_{H}\bigg)\otimes {\rm Id}_{E}\bigg|\\
    &=\sup_{M}e^{\phi}\bigg|\sqrt{-1}\Lambda_{\omega}F_H-\bigg(\frac{1}{r}\sqrt{-1}\Lambda_\omega{\rm tr}F_{H}\bigg)\otimes {\rm Id}_{E}\bigg|
    \end{split}
\end{equation}
Due to the fact that $\sqrt{-1}\Lambda_{\omega}F_H$ is self-adjoint with respect to $H$, we know that the eigenvalues of $\sqrt{-1}\Lambda_{\omega}F_H$ are real values. Then for any $\lambda_1,\cdots,\lambda_r\in \mathbb{R}, C\in \mathbb{R}$, we have the inequality
\begin{equation*}
    \sum_{j=1}^r(\lambda_j-\bar{\lambda})\leq \sum_{j=1}^r(\lambda_j-C)^2,
\end{equation*}
where $\bar{\lambda}=\frac{1}{r}\sum_{j=1}^r\lambda_j$. So we have
\begin{equation}\label{sec:4eq3}
    \bigg|\sqrt{-1}\Lambda_{\omega}F_H-\bigg(\frac{1}{r}\sqrt{-1}\Lambda_\omega{\rm tr}F_{H}\bigg)\otimes {\rm Id}_{E}\bigg|\leq |\sqrt{-1}\Lambda_{\omega}F_H-\lambda\cdot {\rm Id}_E|_H,
\end{equation}
where $\lambda=\frac{2\pi\deg_{\omega}(E)}{r{\rm Vol}(M,\omega)}$. Combining (\ref{sec:4eq2}) and (\ref{sec:4eq3}) yields that
\begin{equation}\label{sec:4eq4}
     \sup_{M}|\sqrt{-1}\Lambda_{\tilde{\omega}}F_H^{\perp}|_{H}\leq e^{\sup_{M}\phi}\sup_M|\sqrt{-1}\Lambda_{\omega}F_H-\lambda\cdot {\rm Id}_E|_H.
\end{equation}
Since $E$ is $\omega$-semi-stable, by Theorem \ref{maintheorem}, it admits an approximate Hermitian--Einstein structure on $E$. So for any $\varepsilon>0$, there exists a Hermitian metric $H_{\varepsilon}$ such that
\begin{equation}\label{sec4:eq5}
    \sup_{M}|\sqrt{-1}\Lambda_{\omega}F_{H_{\varepsilon}}-\lambda\cdot {\rm Id}_E|_{H_{\varepsilon}}<\varepsilon e^{-\sup_{M}\phi}.
\end{equation}
Combining (\ref{sec4:eq5}) with (\ref{sec:4eq4}) gives (\ref{sec:4eq1}).
\end{proof}

\begin{proof}[ Proof of theorem \ref{thm2}]Let $(M,\tilde{\omega})$ be a compact Astheno-K\"ahler manifold of dimension $n$, and $\omega$ a Gauduchon metric which is conformal to $\tilde{\omega}$. Let $E=\{E_i,\varphi_{ij}\}_{i,j\in I}$ be an $\alpha$-twisted holomorohic vector bundle of rank $r$ over $M$. Given any Hermitian metric $H=\{H_i\}_{i\in I}$, recall that
\begin{equation*}
    c_1(E,H)=\frac{\sqrt{-1}}{2\pi}{\rm tr}F_H, c_{2}(E,H)=-\frac{1}{8\pi^2}\Big(({\rm tr}F_H)^2-{\rm tr}(F_H^2)\big).
\end{equation*}
Since $F_H^{\perp}=F_H-\frac{1}{r}{\rm tr}F_H\otimes {\rm Id}_E$, one can easily get
\begin{equation*}
    {\rm tr}(F_H^2)=\frac{1}{r}({\rm tr}F_H)^2+{\rm tr}(F_{H}^{\perp}\wedge F_H^{\perp}).
\end{equation*}
Then we have
\begin{equation*}
    \begin{split}
        &\quad 4\pi^2\bigg(2c_{2}(E,H)-\frac{r-1}{r}c_1(E,H)\wedge c_1(E,H)\bigg)\\
        &=-\big(({\rm tr}F_H)^2-{\rm tr}(F_H^2)\big)+\frac{r-1}{r}({\rm tr}F_H)^2={\rm tr}(F_H^2)-\frac{1}{r}({\rm tr}F_H)^2\\
        &={\rm tr}(F_H^{\perp}\wedge F_H^{\perp}).
    \end{split}
\end{equation*}
By the Riemann bilinear relations, we have
\begin{equation}\label{sec4:eq6}
\begin{split}
     &\quad \int_{M}\bigg(2c_2(E)-\frac{r-1}{r}c_1(E)\wedge c_1(E)\bigg)\wedge \frac{\tilde{\omega}^{n-2}}{(n-2)!}\\
     &= \int_{M}\bigg(2c_2(E,H)-\frac{r-1}{r}c_1(E,H)\wedge c_1(E,H)\bigg)\wedge \frac{\tilde{\omega}^{n-2}}{(n-2)!}\\
     &=\frac{1}{4\pi^2}\int_{M}{\rm tr}(F_H^{\perp}\wedge F_H^{\perp})\wedge \frac{\tilde{\omega}^{n-2}}{(n-2)!}\\
     &=\frac{1}{4\pi^2}\int_{M}|F_H^{\perp}|_{H,\tilde{\omega}}-|\sqrt{-1}\Lambda_{\tilde{\omega}}F_{H}^{\perp}|_{H}^2\frac{\tilde{\omega^n}}{n!}.
     \end{split}
\end{equation}
Since $E$ is $\omega$-semi-stable, by Lemma \ref{lem4:1}, for every $\varepsilon>0$, there exists a Hermitian metric $H_{\varepsilon}$ such that
\begin{equation}\label{sec4:eq7}
    \sup_{M}|\sqrt{-1}\Lambda_{\tilde{\omega}}F_H^{\perp}|_{H_{\varepsilon}}\leq \varepsilon.
\end{equation}
Combining (\ref{sec4:eq6}) and (\ref{sec4:eq7}), and letting $\varepsilon \to 0$, we obtain
\begin{equation*}
\int_{M}\bigg(2c_2(E)-\frac{r-1}{r}c_1(E)\wedge c_1(E)\bigg)\wedge \frac{\tilde{\omega}^{n-2}}{(n-2)!}\geq 0.
\end{equation*}
\end{proof}

\medskip

\noindent{\bf Data Availability Statement:} Not applicable.

\medskip

\noindent {\bf  Acknowledgements:} The author would like to thank Prof. Xi Zhang and Dr. Pan Zhang for their useful discussions and helpful comments.
The research was supported by the National Key R and D Program of China 2020YFA0713100.
The  author is partially supported by NSF in China No.12141104 and 11721101.

\medskip

\noindent {\bf Conflicts of Interest:} The authors declare no conflicts of interest.

\medskip

%-----------------------------------------------------------------------------
%-----------------------------------------------------------------------------

\end{document}